\documentclass[11pt]{article}
\usepackage{amssymb}
\usepackage{mathrsfs}
\usepackage{latexsym,amsmath,amssymb,amsfonts,epsfig,graphicx,cite,psfrag}
\usepackage{eepic,color,colordvi,amscd}
\usepackage{ebezier}
\usepackage{verbatim}
\usepackage{subcaption}
\usepackage{enumitem}
\usepackage{algorithm}
\usepackage{algpseudocode}
\usepackage[normalem]{ulem}
\usepackage{hyperref}
\usepackage{amsthm}
\usepackage{longtable}
\usepackage{comment}
\usepackage{color}
\usepackage{xcolor}
\usepackage{graphicx}
\usepackage{epstopdf}
\usepackage{longtable}
\usepackage{booktabs}
\usepackage{mathtools}
\usepackage{multicol, multirow}
\usepackage{capt-of}
\usepackage{longtable}
\usepackage{amsopn}
\usepackage{tikz}
\usepackage{extarrows}
\usetikzlibrary{positioning,arrows.meta,decorations.pathreplacing}
\usepackage{float}
\definecolor{blue}{rgb}{0,0,0.9}
\definecolor{red}{rgb}{0.9,0,0}
\definecolor{green}{rgb}{0,0.9,0}

\newcommand{\cU}{{\cal U}}

\newcommand{\cN}{{\cal N}}

\newcommand{\Rbar}{\overline{\mathbb{R}}}

\theoremstyle{plain}
\newtheorem{remark}{Remark}
\newtheorem{example}{Example}[section]

\newtheorem{assumption}{Assumption}
\newtheorem{definition}{Definition}
\newtheorem{proposition}{Proposition}
\newtheorem{theorem}{Theorem}
\newtheorem{corollary}{Corollary}

\def\<{\big\langle}
\def\>{\big\rangle}

\def\T{{\cal T}}
\DeclareMathOperator{\dom}{dom}

\def\bX{{\mathbb{X}}}
\def\bY{{\mathbb{Y}}}

\def\K{\mathcal{K}}
\def\F{\mathcal{F}}

\def\R{\mathbb{R}}

\def\prox{{\operatorname{Prox}}}
\def\Prox{{\operatorname{Prox}}}

\def\N{\mathcal{N}}

\def\S{\mathbb{S}}

\def\dist{{\rm dist}}
\def\dd{{\rm diag}}

\DeclareMathOperator{\epi}{epi}

\def\lin{{\operatorname{lin}}}

\def\bB{{\mathbb{B}}}

\def\tol{\tt{tol}}
\def\timelimit{\tt{TimeLimit}}
\def\iterlimit{\tt{IterLimit}}

\def\co{{\rm co}}

\def\env#1{E_{#1}}

\DeclareMathOperator{\diag}{diag}

\let\svthefootnote\thefootnote
\newcommand\blankfootnote[1]{
\let\thefootnote\svthefootnote
}
\usepackage[many]{tcolorbox}
\newtcolorbox{boxA}{
    fontupper = \bf,
    boxrule = 1.5pt,
    colframe = black 
}

\newcommand{\xdownarrow}[1]{%
  {\left\downarrow\vbox to #1{}\right.\kern-\nulldelimiterspace}
}

\def\dd{\Phi_z}
\def\E{\mathcal{E}}
\def\fE{f_{\E}}

\begin{document}
        \title{
        On the efficient computation of proximal operators of affine-constrained nonconvex functions
        }
        \author{
        Di Hou\thanks{Department of Mathematics, National
          University of Singapore, Singapore
          119076 ({\tt dihou@u.nus.edu}).
          }, \quad 
	    Tianyun Tang\thanks{Department of Statistics, The University of Chicago ({\tt ttang@u.nus.edu}).
          }, \quad 
	    Kim-Chuan Toh\thanks{Department of Mathematics, and 
          Institute of Operations Research and Analytics, 
          National University of Singapore, 
          Singapore
          119076 ({\tt mattohkc@nus.edu.sg}). 
         }, \quad
        Shiwei Wang\thanks{
Institute of Applied Mathematics, Academy of Mathematics and Systems Science, Chinese Academy of Sciences, Beijing,
China. This work is done while the author is a visiting scholar at the Institute of Operational Research and Analytics, National
University of Singapore, Singapore ({\tt wangshiwei@amss.ac.cn}).
          }
 	}
	\date{\today}
\maketitle


\begin{abstract}

Proximal operators with affine constraints arise in numerous models in nonconvex projection, composite optimization, and structured regularization. However, their efficient computation remains challenging due to the simultaneous presence of affine constraints and nonsmooth, possibly nonconvex objectives. In this work, we develop a unified dual-representability framework for analyzing and computing affine-constrained proximal mappings. 
Specifically, we introduce a multiplier inclusion formulation that connects the primal affine-constrained proximal problem to an unconstrained convex dual problem. Based on this formulation, we prove that, whenever the associated dual inclusion problem admits a solution, strong duality holds. 
For convex functions and a broad class of prox-regular nonconvex functions,  we establish that dual representability holds under a simple subdifferential sum rule, and further develop a hierarchy of verifiable regularity conditions that guarantee this sum rule. 
In addition, we analyze the smoothness and strong convexity properties of the dual objective, providing a rigorous foundation that guarantees fast local convergence rates for efficient first- and second-order methods. 
Numerical experiments demonstrate that the proposed dual reformulation enables the reliable computation of globally optimal solutions for a range of large-scale nonconvex proximal and projection problems using existing convex optimization solvers. 

\end{abstract}

 \noindent{\bf keywords:} Strong duality, Nonconvex  proximal mapping, Dual representability


\section{Introduction}

In this paper, we 
consider the function $\fE: \bX\mapsto \R\cup\{+\infty\}$ defined by
\begin{equation}\label{deffE}
    \fE(x)\coloneqq f(x)+\delta_{\E}(x),
\end{equation}
where $f:\bX \to\R\cup\{+\infty\}$ is a proper, lower semicontinuous (lsc), but possibly nonconvex function, and $\delta_{\E}(x)$ is the indicator function of an affine set ${\E}\coloneqq \{x\in\bX\;|\; Ax=b\}$, which takes the value $0$ if $x\in \E$, and $+\infty $ otherwise. The spaces $\bX$ and $\bY$ are finite-dimensional Hilbert spaces, $A:\bX \to\bY $ is a surjective linear operator and $b\in\bY $ is a fixed vector. We assume that the effective domain of $\fE$, given by 
\begin{equation}\label{Edom}
\mathcal{F}\coloneqq \E\cap\dom f,
\end{equation}
is nonempty. Under these assumptions, we study the following proximal minimization problem associated with $\lambda\fE$:
\begin{equation}\label{Prox}
    \env{\lambda \fE}(z)\coloneqq\min_{x\in\bX }\ \left\{\lambda f(x)+\frac{1}{2}\|x - z\|^2
    \;\mid\; A x = b\right\},\tag{P}
\end{equation}
where $\lambda>0$ is a  parameter.
Proximal minimization problems of the form~\eqref{Prox} arise as the subproblems in many important optimization algorithms, including proximal gradient methods, proximal point algorithms, and operator-splitting schemes, see, e.g., \cite{parikh2014proximal,rockafellar1976monotone}. In these methods, each iteration requires the evaluation of a proximal mapping, and the overall performance depends critically on how accurately and efficiently this subproblem can be solved.
A particularly important special case is the projection onto a feasible set under affine constraints, which takes the form
\begin{equation}\label{Proj}
\Pi_{\K\cap \E}(z)=\underset{x\in\bX}{\arg\min}\ \left\{\delta_\K(x)+\frac{1}{2}\|x - z\|^2 \;\middle|\; A x = b\right\},
\end{equation}
where $\mathcal K\subseteq\bX $ is a closed, possibly nonconvex set.
Projection problems of the form~\eqref{Proj} arise in a wide range of applications, where the constraint set $\K$ incorporates important structural information, such as sparsity~\cite{deng2025alternating}, low-rank structure~\cite{li2025b}, and geometric properties~\cite{RiNNAL,manibook}. While~\eqref{Proj} is conceptually simple, it is already computationally challenging due to the simultaneous presence of rigid affine constraints and nonsmooth, possibly nonconvex geometry.
Consequently, most  existing literature focuses on finding a feasible point instead of the exact projection, typically via first-order, alternating, or relaxation-based methods~\cite{xiao2025quadratically,xiao2025exact,lewis2008alternating,drusvyatskiy2015transversality}.

In this work, instead of finding approximate or local optimal solutions, we show that exact affine-constrained proximal and projection mappings can be computed under verifiable structural conditions in both convex and nonconvex settings through a unified dual inclusion framework that delivers both theoretical guarantees and efficient algorithms.


\subsection{Dual representation}

The difficulty of solving the primal problem~\eqref{Prox} motivates the search for alternative formulations that can better reveal the underlying structure of the proximal problem and enable more efficient computation. In Section~\ref{sec:dual}, we investigate the convex dual formulation of~\eqref{Prox}, which transforms the original constrained minimization problem into an inclusion problem in the dual space:
\begin{equation}\label{eq:inclusion}
    0\in F(y) := A\,\prox_{\lambda f}(z + A^* y) - b,
\end{equation}
where the proximal operator $\Prox_{\lambda f}$ is the set-valued mapping defined by
\begin{equation}\label{eq:prox}
\prox_{\lambda  f}(t)\coloneqq\underset{x\in \bX }{\arg\min}\;\left\{\lambda f(x)+\frac{1}{2}\|x-t\|^2\right\}.
\end{equation}
It is well known that for a general optimization problem, 
its corresponding dual problem may be infeasible. Even when a solution exists, solving the dual problem can be highly nontrivial. More fundamentally, for general optimization problems, a nonzero duality gap may arise, so that dual solvability alone does not guarantee primal solvability.

However, due to the special structure of the primal problem~\eqref{Prox}, we will prove in Proposition~\ref{prop:SD-eq} that as long as its dual inclusion problem~\eqref{eq:inclusion} admits a solution $y^*$, then there is no duality gap, and an optimal solution to the primal problem can be recovered directly via
\begin{equation*}
x^* \in \Prox_{\lambda f}(z + A^*y^*).
\end{equation*}
Thus, to find a global solution of~\eqref{Prox}, it suffices to find a solution to~\eqref{eq:inclusion}. We say that~\eqref{Prox} is dual-representable if such a solution exists.

We provide a complete characterization of the dual representability of~\eqref{Prox}. If the function $f$ is convex, then the operator $F:\bY\to\bY$ defined in~\eqref{eq:inclusion} is single-valued and monotone, since the proximal mapping $\Prox_{\lambda f}$ is single-valued and firmly nonexpansive. In Subsection~\ref{subsec:cvx-prox}, we prove that, in this convex case, ~\eqref{Prox} is dual-representable if and only if the subdifferential sum rule (SSR)
\[
\partial (f_{\E})(x)=\partial f(x)+ A^* \bY
\]
holds, where $f_\E$ is defined in \eqref{deffE}. To ensure the validity of the SSR, we further establish a hierarchy of regularity conditions based on commonly used and verifiable constraint qualifications. This unifies and extends classical results \cite{deutsch1997dual} from convex projection problems to the more general convex proximal setting.
The nonconvex case is more complicated because the proximal mapping $\Prox_{\lambda f}$ may become set-valued and discontinuous, so that the operator $F$ in \eqref{eq:inclusion} is generally nonmonotone and possibly multi-valued. In Subsection~\ref{subsec:nonconvex-prox}, under mild prox-regularity and a nonconvex version of the SSR, we derive sufficient conditions for dual representability and again provide a verifiable regularity hierarchy. This extends the analysis of special affine-constrained 
spherical sets in~\cite{RiNNAL} to general affine-constrained proximal problems.

To further enhance the applicability of our framework, we investigate the regularity properties of the dual problem, with a particular focus on smoothness and strong convexity. We establish differentiability of the dual objective, together with explicit subdifferential and gradient characterizations, in Subsection~\ref{subsec:property-dual}. Building on these results, we characterize the strong convexity via the strong monotonicity of an associated operator in Subsection~\ref{sec:scvx}, and apply it to the $C^2$ smooth manifold projection problem in Subsection~\ref{subsec-C2-mani}.
This analysis generalizes the strong convexity theory for projections onto spherical manifolds developed in~\cite[Proposition~1]{RiNNAL} to projection problems over general smooth manifolds.

In Section~\ref{sec:applications}, we apply the proposed framework to a broad class of problems, including projection, proximal gradient, and proximal composite models, and show that each admits a solvable dual inclusion formulation. This demonstrates that a wide range of affine-constrained proximal problems, both convex and nonconvex, can be treated within a single dual-representability framework.
For several representative problem classes, the framework further yields sharper results than those available in existing analyses. In particular, for projection-type problems, it leads to clearer dual characterizations and stronger regularity conclusions under weaker assumptions.
More generally, rather than relying on problem-specific derivations, the framework provides a unified approach for deriving dual representations and structural properties. Many known formulas and regularity results can be recovered in a systematic manner by verifying a common set of assumptions, while the same approach also facilitates the derivation of new results with significantly reduced technical overhead.


\subsection{Contributions}

The main contributions of this paper are summarized as follows:

\begin{enumerate}
\item We develop a unified multiplier inclusion framework that reformulates the affine-constrained proximal problem \eqref{Prox} in the dual space. We prove that, whenever the associated dual inclusion admits a solution, strong duality holds with zero duality gap, and every dual solution yields a primal solution via a proximal evaluation. We further characterize the equivalent conditions under which this exact dual representation holds, thereby identifying when affine-constrained nonconvex proximal problems can be solved through its convex dual formulations.

\item In the convex setting, we show that strong duality and the existence of multiplier solutions for~\eqref{Prox} are equivalent to the validity of a simple subdifferential sum rule (SSR) for the pair $(f,\delta_\E)$.
To make this condition verifiable in practice, we then develop a hierarchy of regularity conditions that comprises several commonly used and easily checkable constraint qualifications, each of which guarantees SSR. This hierarchy extends the classical convex projection theory: when $f=\delta_{\cal K}$, it reduces to the usual chain of conditions culminating in the strong conical hull intersection property
(CHIP) for the pair $({\cal K},\E)$.

\item In the nonconvex setting, we show that local dual representability of~\eqref{Prox} is ensured under mild prox-regularity and a nonconvex subdifferential sum rule (NSSR) for the pair $(f,\delta_\E)$. Moreover, we also establish a hierarchy of verifiable regularity conditions that guarantee NSSR. Under these conditions, the original nonconvex proximal mapping can be characterized via a convex dual inclusion, whose solution yields a globally optimal primal solution. This result substantially extends the special retraction-based theory in RiNNAL~\cite{RiNNAL} to general prox-regular functions and provides a principled convex-analysis pathway for solving nonconvex projection problems.

\item We analyze the smoothness and strong convexity properties of the dual objective induced by the multiplier inclusion formulation. In particular, we establish continuity and differentiability results under mild regularity conditions, and derive explicit expressions for the dual gradient and subdifferential. 
We provide equivalent characterizations of strong convexity in terms of the strong monotonicity of an associated operator, and further demonstrate strong convexity of the dual objective function for projection problems onto $C^2$ manifold under nondegeneracy conditions.
These properties provide the theoretical foundation for efficient first- and second-order methods for solving the dual problem.

\item We demonstrate that the proposed dual representability framework unifies a broad class of projection and proximal type problems under a single analytical framework. By applying the framework to projection, proximal gradient, and proximal composite models, we show that many existing problem-specific dual formulations and regularity results can be systematically recovered by verifying appropriate assumptions, while substantially simplifying the derivation of optimality conditions. In particular, for projection problems, the framework yields clearer dual representations and sharper regularity and strong convexity characterizations.

\item We conduct extensive numerical experiments to demonstrate that the proposed dual reformulation enables the effective computation of solutions to affine-constrained nonconvex proximal problems by applying existing solvers 
to the resulting convex unconstrained dual problems. The results confirm that this approach reliably delivers globally optimal primal solutions and remains efficient on large-scale instances.

\end{enumerate}

Together, these results show that a broad class of affine-constrained nonconvex proximal problems admits an exact dual representation in terms of convex unconstrained problems, thereby unifying projection-type and proximal-type formulations under a common multiplier framework and enabling the computation of global solutions of these challenging nonconvex problems via well established convex optimization techniques.


\subsection{Organization}

The rest of the paper is organized as follows. 
Section~\ref{sec:dual} introduces the dual representation of the proximal problem, analyzes the smoothness and strong convexity properties of the associated dual function, and characterizes dual representability (zero duality gap) in both the convex and nonconvex settings. Section~\ref{sec:applications} presents applications of the proposed framework. Numerical experiments are reported in Section~\ref{sec:numerical}, followed by concluding remarks in Section~\ref{sec:conclusion}.


\subsection{Notation}

We use $\Rbar:= \R \cup \{\pm\infty\}$ to denote the extended real line, $\mathbb{B}_{\epsilon}(\bar{x}) := \{x \in \mathbb{R}^n \mid \|x - \bar{x}\| < \epsilon\}$ to denote the open ball centered at $\bar{x}$ with radius $\epsilon > 0$. We let $[n] \coloneqq \{1,2,...,n\}$
for any positive integer $n$. 
For a set ${\cal S}\subset\mathbb{X}$, $\co ({\cal S})$ denote its convex hull, and $\lin({\cal S})$ denotes the largest linear subspace contained in ${\cal S}$. 
For a function $f:\mathbb{X}\to\overline{\mathbb{R}}$, we denote by
${\rm d}f$ the subderivative,
$\widehat{\partial}f$ the regular subgradient,
$\partial f$ the limiting subgradient, 
and $\partial^\infty f$ the horizon subgradient, 
as defined in~\cite[Definition~8.1, 8.3]{rockafellar1998variational}.
When \(f\) is the indicator function of a set \({\cal S}\), the regular and limiting subgradients \(\widehat{\partial} f\) and \(\partial f\) reduce to the corresponding regular and limiting normal cones \(\widehat{\cal N}_{\cal S}\) and \({\cal N}_{\cal S}\), respectively (see Definition~\ref{def:normal-cone} below).
We denote the conjugate of $f$ by $f^*$,
and define its (effective) domain as
\[
\dom f \;:=\; \{x\in\mathbb{X}\;|\; f(x)<+\infty\}.
\]
The epigraph of 
$f$ is defined by
\[
\epi f\;\coloneqq\; \{(x,\tau)\in\bX\times \R \;\mid\; \tau\geq f(x),\; x\in \dom f\}.
\]
The {proximal mapping} of \(f\) is the set-valued operator
\[
\Prox_{f}(t)
\;:=\;
\underset{x\in\mathbb{X}}\arg\min
\Big\{
f(x) + \frac{1}{2}\|x - t\|^{2}
\Big\},
\quad t\in\mathbb{X}.
\]
The corresponding Moreau envelope is
\[
E_{f}(t)
\;:=\;
\min_{x\in\mathbb{X}}
\Big\{
f(x) + \frac{1}{2}\|x - t\|^{2}
\Big\},
\quad t\in\mathbb{X}.
\]
For a set-valued mapping $\mathcal{V}:\mathbb{X}\rightrightarrows\mathbb{Y}$, its outer limit at $\bar{x}$ is defined by 
$$\limsup\limits_{x\rightarrow\bar{x}}\mathcal{V}(x):=\left\{d\in\mathbb{Y} \mid \exists\;x_k\rightarrow\bar{x},\; d_k\rightarrow d\;{\rm such\;that}\;d_k\in\mathcal{V}(x_k)\;\forall\,k\right\}.$$
The following definitions of the normal and tangent cones of sets are taken from \cite[Definition 6.3, Proposition 6.5, and Example 6.16]{rockafellar1998variational}. 
\begin{definition}\label{def:normal-cone}
Let  ${\cal S}$ be a nonempty closed subset of $\mathbb{X}$ and  $\bar{x}\in {\cal S}$ be given. We call
	\begin{equation}\label{def:regular-normal-cone}
		\widehat{\cal N}_{\cal S} (\bar{x}):= \{d\in\mathbb{X}\mid \langle {d}, x-\bar{ x}\rangle \leq o(\|{x}-\bar{x}\|) \quad \forall\, {x}\in {\cal S}\}
	\end{equation}
	the regular normal cone to set ${\cal S}$ at point $\bar{x}$ and 
\begin{equation*}\label{eq:def-limit-norm}
		{\cal N}_{\cal S} (\bar{x}):= \limsup\limits_{x\rightarrow\bar{x}}\widehat{\cal N}_{\cal S} ({x})
	\end{equation*}
	the limiting normal cone
    to set ${\cal S}$ at point $\bar{x}$. 
\end{definition}

\begin{definition}
Let  ${\cal S}$ be a nonempty closed subset of $\mathbb{X}$ and  $\bar{x}\in {\cal S}$ be given. 
We call
$${\cal T}_{\cal S}(\bar{x})=\{d\mid{\rm dist}(\bar{x}+td,{\cal S})=o(t),\;t\geq0\}$$
the tangent cone to set ${\cal S}$ at point $\bar{x}\in{\cal S}$.
\end{definition}


\section{Dual representation}\label{sec:dual}

In this section, we summarize key properties of the affine-constrained proximal problem~\eqref{Prox} and derive its dual formulation. Both convex and nonconvex settings are analyzed, and conditions are established under which strong duality holds and the dual formulation admits an optimal solution. We begin by recalling the notion of prox-boundedness from~\cite[Definition~1.23]{rockafellar1998variational}, which requires that \(f\) majorizes a quadratic function and hence ensures that its Moreau envelope is well-defined.

\begin{definition}[\bf Prox-boundedness]
A function $f:\bX \to \overline{\mathbb{R}}$ is said to be {prox-bounded} if there exists $\gamma>0$ such that the Moreau envelope $E_{\gamma f }(x) > -\infty$ for some $x\in\bX$. The supremum $\lambda_f$ of the set of all such $\gamma$ is called the {threshold} of prox-boundedness for $f$.
\end{definition}

Next we introduce a standing assumption that guarantees the well-posedness of both the primal problem~\eqref{Prox} and its dual formulation~\eqref{D}, where $\bX$ and $\bY$ are finite-dimensional Hilbert spaces.

\begin{assumption}\label{assump:basic}
The function \(f:\bX\to\Rbar\) is proper, lsc, and prox-bounded with threshold $\lambda_f>\lambda$.  
The linear operator \(A:\bX\to\bY\) is surjective, and $\mathcal{F}$ defined in \eqref{Edom} is nonempty.
\end{assumption}

We now derive the dual formulation of problem~\eqref{Prox}. The Lagrangian function associated with problem~\eqref{Prox} is
\[
   L(x,y) = \lambda f(x)+\frac{1}{2} \|x - z\|^2 + \langle y, b - A x \rangle.
\]
Denote $v(y):=z+A^*y$. Then the corresponding dual function is given by
\begin{equation}\label{eq:dual-function-prox}
    \Big(\inf_{x \in \bX }\; L(x,y) \Big)  \; = \env{\lambda f}(v(y))- \frac{1}{2} \|v(y)\|^2+\langle b,y \rangle + \frac{1}{2} \|z\|^2
    \;=:\; -\dd(y),
\end{equation}
where \(E_{\lambda f}\) denotes the Moreau envelope of \(\lambda f\),
and the minimizer of the above minimization problem satisfies \(x\in\mathrm{Prox}_{\lambda f}(v(y))\).
The dual problem associated with~\eqref{Prox} can be written in the following form:
\begin{equation}\label{D}
    v(D) = -\min_{y \in \bY }\; \dd(y).\tag{D}
\end{equation}
Since our goal is to solve the primal problem~\eqref{Prox} via its dual formulation~\eqref{D}, it is essential to characterize conditions under which the two problems are equivalent. The following proposition provides the equivalent characterization of strong duality.

\begin{proposition}\label{prop:SD-eq} Under Assumption~\ref{assump:basic},
the following statements are equivalent:
\begin{enumerate}
[label=\bf (\roman*)]
\item \emph{\bf  (Attainable strong duality)} there exist $x^*\in \F$ and $ y^*\in \bY$ such that
\begin{equation*}\label{eq:equal-values}
  v(P) 
  = 
  \lambda f(x^*)+\frac{1}{2} \|x^*-z\|^2 
  = 
  -\dd(y^*) 
  =
  v(D),
\end{equation*}
i.e., $x^*$ solves \eqref{Prox} and $ y^*$ solves \eqref{D}, and there is no duality gap.
\item \emph{\bf (Lagrangian saddle point)} there exist $x^*\in \F$ and $ y^*\in \bY$ such that
\begin{equation}\label{eq:saddle-system}
  x^* \in \underset{x\in\bX }{\arg\min}\; L(x, y^*) 
  .
\end{equation}
\item \emph{\bf (Dual inclusion)} there exists $ y^*\in \bY$ such that
\begin{equation}\label{eq:dual-repre}
    0\in F(y^*) = A\,\prox_{\lambda f}(z + A^* y^*) - b.
\end{equation}
\end{enumerate}
\end{proposition}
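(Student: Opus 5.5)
I will prove the cycle of implications (i)$\Rightarrow$(ii)$\Rightarrow$(iii)$\Rightarrow$(i). The only tools needed are weak duality and the observation that the dual function~\eqref{eq:dual-function-prox} is exactly $\inf_x L(x,y)$. That infimum is attained precisely on $\Prox_{\lambda f}(v(y))$, which is nonempty by prox-boundedness with $\lambda_f>\lambda$ (see \cite[Theorem~1.25]{rockafellar1998variational}). First I record weak duality. For any $x\in\F$ and $y\in\bY$ we have $Ax=b$, so
\[
\lambda f(x)+\tfrac12\|x-z\|^2 = L(x,y)\ \ge\ \inf_{u}L(u,y) = -\dd(y),
\]
and hence $v(P)\ge v(D)$.

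\textbf{(iii)$\Rightarrow$(ii).} Given $y^*$ with $0\in A\,\Prox_{\lambda f}(z+A^*y^*)-b$, choose $x^*\in\Prox_{\lambda f}(v(y^*))$ with $Ax^*=b$. Since $\dom f$ contains $x^*$, we get $x^*\in\F$. Because $L(\cdot,y^*)$ differs from $\lambda f(\cdot)+\frac12\|\cdot-v(y^*)\|^2$ only by a constant, $x^*$ minimizes $L(\cdot,y^*)$. This is the expansion already used in deriving~\eqref{eq:dual-function-prox}.

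\textbf{(ii)$\Rightarrow$(i).} Take $x^*\in\F$ minimizing $L(\cdot,y^*)$. Then
\[
-\dd(y^*)=L(x^*,y^*)=\lambda f(x^*)+\tfrac12\|x^*-z\|^2,
\]
where the second equality uses $Ax^*=b$. Combining this with weak duality gives
\[
v(P)\le \lambda f(x^*)+\tfrac12\|x^*-z\|^2 = -\dd(y^*)\le v(D)\le v(P).
\]
So every inequality is an equality: $x^*$ is primal optimal, $y^*$ is dual optimal, and there is no gap.

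\textbf{(i)$\Rightarrow$(iii).} Given $x^*\in\F$ and $y^*$ with $\lambda f(x^*)+\frac12\|x^*-z\|^2=-\dd(y^*)$, use $Ax^*=b$ once more:
\[
L(x^*,y^*)=\lambda f(x^*)+\tfrac12\|x^*-z\|^2=-\dd(y^*)=\inf_x L(x,y^*).
\]
Thus $x^*\in\arg\min L(\cdot,y^*)=\Prox_{\lambda f}(z+A^*y^*)$, and $Ax^*-b=0$ gives~\eqref{eq:dual-repre}.

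There is no deep obstacle here. The step that needs care is the identification $\arg\min_x L(x,y)=\Prox_{\lambda f}(v(y))$ together with the finiteness of $\dd$. This amounts to carefully completing the square,
\[
\tfrac12\|x-z\|^2-\langle A^*y,x\rangle=\tfrac12\|x-v(y)\|^2-\tfrac12\|v(y)\|^2+\tfrac12\|z\|^2,
\]
and invoking prox-boundedness so that $E_{\lambda f}$ is finite and attained everywhere. With that in hand, all quantities in the chain of inequalities are finite and the argument goes through.
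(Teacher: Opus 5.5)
Your proof is correct. The paper proves (ii)$\Leftrightarrow$(iii) with the same ingredient you use: the minimizers of $L(\cdot,y^*)$ are exactly the points of $\Prox_{\lambda f}(z+A^*y^*)$, obtained by completing the square, and $Ax^*=b$ then gives $0\in F(y^*)$. The difference is in (i)$\Leftrightarrow$(ii), which the paper does not prove but takes from general saddle-point duality \cite[Theorem~2.158]{bonnans2013perturbation}.

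You instead close the cycle (iii)$\Rightarrow$(ii)$\Rightarrow$(i)$\Rightarrow$(iii) using only weak duality. You obtain weak duality directly from $L(x,y)=\lambda f(x)+\frac12\|x-z\|^2$ for every $x\in\F$. Your route is self-contained. It also shows why (ii), which only asks for minimization in $x$, is a genuine saddle-point condition: for feasible $x^*$ the map $y\mapsto L(x^*,y)$ is constant, so the maximization half holds automatically. The paper's citation is shorter but leaves this point implicit.

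A minor remark: your chain of inequalities does not actually need the finiteness of $E_{\lambda f}$ from prox-boundedness. Once $x^*\in\F$ minimizes $L(\cdot,y^*)$, the value $-\Phi_z(y^*)=L(x^*,y^*)$ is already finite, and weak duality holds in the extended reals.
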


\begin{proof}
{(i) $\Leftrightarrow$ (ii).}
The equivalence follows from standard saddle-point duality, see \cite[Theorem~2.158]{bonnans2013perturbation}.

{(ii) $\Rightarrow$ (iii).}
Fix $x^*$ and $y^*$ in (ii). The optimality condition for the $x$–minimization in $L(\cdot,y^*)$ indicates that 
$
x^* \in \prox_{\lambda f}(z + A^{*}y^*).
$
To verify~{(iii)},
we use the feasibility condition $A x^*=b$ (since $x^*\in\mathcal F$) to obtain that
\[
  0= A x^* - b \;\in \; A\,\prox_{\lambda f}(z + A^{*}y^*) - b.
\]

{(iii) $\Rightarrow$ (ii).}
Assume that $0\in F(y^*)$. Then by (iii), there exists 
$
x^* \in \prox_{\lambda f}(z + A^{*}y^*)
$
such that $Ax^*=b$, and hence $x^*\in\mathcal F$. By the definition of proximal mapping, $x^*$ solves
\[
  x^* \in \underset{x\in\bX }{\arg\min}\;\Bigl\{\lambda f(x)+\tfrac12\|x-z\|^2 - \langle A^{*}y^*,x\rangle\Bigr\}
  \;=\; \underset{x\in\bX }{\arg\min}\; L(x,y^*),
\]
which is exactly (ii).
\end{proof}

\begin{remark}
    The above equivalence is useful because it yields a computationally tractable characterization of primal 
    optimality in the dual space. For projection problems, such dual characterizations are well established in the convex setting (see, e.g., \cite{deutsch1997dual}), but for nonconvex projections, they remain far less developed and are only partially treated in a few special cases (e.g., \cite{qi2014computing,RiNNAL}).
\end{remark}

\begin{definition}[\bf Dual-representable]
We say that problem~\eqref{Prox} is {dual-representable} at $z\in\bX$ if any (and hence all) of the equivalent conditions in Proposition~\ref{prop:SD-eq} are satisfied.
\end{definition}

In the following subsections, we first provide some basic properties of the dual function, and then characterize the conditions under which problem~\eqref{Prox} is dual-representable, thereby ensuring that  the primal problem~\eqref{Prox} can be  solved through its unconstrained convex dual problem~\eqref{D}.


\subsection{Properties of the dual function}\label{subsec:property-dual}

In this subsection, we analyze basic regularity properties of the dual objective function $\dd(y)$. These properties provide a theoretical foundation for the design and analysis of efficient algorithms for solving the dual problem, together with explicit characterizations of its subgradients and gradients.

\begin{proposition}\label{prop:basic-props-dual}
Under Assumption~\ref{assump:basic}, the dual objective function $\dd$ satisfies:
\begin{enumerate}
[label=\bf (\roman*)]
    \item $\dd$ is continuous, convex and locally Lipschitz on $\bY $.
    \item The subdifferential of $\dd$ at $y$ is given by
    \begin{equation}\label{eq:subdiff-co}
        \partial \dd (y) = \operatorname{co}\!\big(A\,\prox_{\lambda f}(z + A^*y) - b\big).
    \end{equation}
    \item If the proximal solution is unique, then $\dd$ is differentiable at $y$ with
    \begin{equation}\label{eq:grad-dual}
        \nabla \dd (y) = A\,\prox_{\lambda f}(z + A^*y) - b.
    \end{equation}
\end{enumerate}
\end{proposition}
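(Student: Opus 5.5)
The plan is to rewrite $\dd$ as a pointwise supremum of affine functions of $y$, and then read off convexity, the subdifferential formula and differentiability from standard results on marginal functions (Danskin-type theorems). Expanding the Moreau envelope,
\[
\env{\lambda f}(v)-\tfrac12\|v\|^2=\inf_{x\in\bX}\Big\{\lambda f(x)+\tfrac12\|x\|^2-\langle x,v\rangle\Big\}.
\]
Substituting $v=z+A^*y$ into \eqref{eq:dual-function-prox} gives
\[
\dd(y)=\sup_{x\in\dom f}\Big\{\langle Ax-b,\,y\rangle+\langle x,z\rangle-\lambda f(x)-\tfrac12\|x\|^2\Big\}-\tfrac12\|z\|^2 .
\]
Each function inside the supremum is affine in $y$, so $\dd$ is convex. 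Prox-boundedness with $\lambda<\lambda_f$ makes $\env{\lambda f}$ finite everywhere, by \cite[Theorem~1.25]{rockafellar1998variational}. Hence $\dd$ is a finite convex function on all of $\bY$, and is therefore continuous and locally Lipschitz. This proves (i).

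For (ii), I will use the level-boundedness of the minimization defining $\env{\lambda f}$. By \cite[Theorem~1.25]{rockafellar1998variational}, for $\lambda<\lambda_f$ the function $(x,v)\mapsto\lambda f(x)+\frac12\|x-v\|^2$ is level-bounded in $x$ locally uniformly in $v$. This gives three facts:
\begin{itemize}
\item $\prox_{\lambda f}(v)$ is nonempty and compact;
\item $\prox_{\lambda f}$ is outer semicontinuous and locally bounded;
\item $\env{\lambda f}$ is continuous.
\end{itemize}
So $\dd$ is a max-type function $y\mapsto\sup_x \phi(x,y)$, where the set of active $x$ is exactly $\prox_{\lambda f}(z+A^*y)$. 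The inclusion ``$\supseteq$'' in \eqref{eq:subdiff-co} is direct. For $\bar x\in\prox_{\lambda f}(z+A^*y)$, the affine function $y'\mapsto\phi(\bar x,y')$ minorizes $\dd$ and touches it at $y$, so its slope $A\bar x-b$ lies in $\partial\dd(y)$. Convexity of $\partial\dd(y)$ then yields the convex hull.

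The reverse inclusion ``$\subseteq$'' is the main obstacle. I would proceed through directional derivatives. The key claim is
\[
\dd'(y;d)=\max\big\{\langle Ax-b,d\rangle : x\in\prox_{\lambda f}(z+A^*y)\big\}.
\]
The lower bound is immediate from the minorants above. For the upper bound:
\begin{itemize}
\item Take $t_k\downarrow0$ and pick $x_k\in\prox_{\lambda f}(z+A^*(y+t_kd))$.
\item Local boundedness and outer semicontinuity give a subsequence $x_k\to\bar x$ with $\bar x\in\prox_{\lambda f}(z+A^*y)$.
\item Since $x_k$ is active at $y+t_kd$ and merely feasible at $y$, we get $\dd(y+t_kd)-\dd(y)\le t_k\langle Ax_k-b,d\rangle$.
\item Dividing by $t_k$ and passing to the limit gives $\dd'(y;d)\le\langle A\bar x-b,d\rangle$.
\end{itemize}
Thus the support function of $\partial\dd(y)$ equals the support function of the compact set $A\prox_{\lambda f}(z+A^*y)-b$. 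Two closed convex sets with the same support function coincide, so \eqref{eq:subdiff-co} follows. A citable alternative is Danskin's theorem or \cite[Theorem~10.31]{rockafellar1998variational}, after checking its compactness and level-boundedness hypotheses.

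Finally, (iii) is immediate from (ii). If $\prox_{\lambda f}(z+A^*y)$ is a singleton, then $\partial\dd(y)$ is a singleton. A finite convex function with a singleton subdifferential is differentiable there, with gradient equal to that element, which is \eqref{eq:grad-dual}.
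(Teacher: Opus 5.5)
Your proposal is correct and takes essentially the same route as the paper. Both write $\Phi_z$ as a pointwise supremum of affine functions of $y$ for (i), obtain (ii) from a Danskin-type formula, and get (iii) from the singleton subdifferential. The only difference is that you prove the Danskin step yourself, via directional derivatives and the outer semicontinuity and local boundedness of the proximal mapping from \cite[Theorem~1.25]{rockafellar1998variational}, whereas the paper cites \cite[Proposition~A.22]{bertsekas1971control}. That citation formally requires a compact $x$-domain, which the paper can only waive in a remark, so your direct argument handles this point rigorously.
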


\begin{proof}
Recall that $v(y):=z+A^*y$ and
\[
L(x,y):=\lambda f(x)+\tfrac12\|x-z\|^2+\langle y,\,b-Ax\rangle,\quad
\dd(y)=\sup_{x\in\bX }\; \{-L(x,y)\}.
\]
\emph{(i) Continuous, convex and locally Lipschitz.}
For each fixed $x \in \bX $ with $f(x) < +\infty$, the map $y \mapsto -L(x,y)$ is affine (hence continuous). If $f(x) = +\infty$, then $-L(x,y) \equiv -\infty$, and such $x$ do not affect the supremum. Thus, $\dd$ is the pointwise supremum of continuous affine functions and is therefore convex and lsc.
By Assumption~\ref{assump:basic}, the function $f$ is proper, lsc, and prox-bounded with threshold $\lambda_f>\lambda$. Hence, by~\cite[Theorem~1.25]{rockafellar1998variational}, the Moreau envelope $E_{\lambda f}$ is finite everywhere, and consequently $\dd(y)$ is finite for all $y\in\bY$.
A convex function that is finite everywhere on a Hilbert space is continuous (indeed, locally Lipschitz) on $\bY $.

\emph{(ii) Subdifferential formula.}
For each fixed $x\in\dom f $, the mapping $y\mapsto L(x,y)$ is affine, hence
continuously differentiable with $\nabla_y L(x,y)=b-Ax$.
For each fixed $y\in\bY $, the mapping $x\mapsto L(x,y)$ is proper and closed. Moreover,
by~\cite[Theorem~1.25]{rockafellar1998variational}, the inner solution set of $\dd(y)= -\inf_x L(x,y)$ is nonempty and locally uniformly compact for all $y\in\bY$.
Therefore we can apply the subdifferentiation rule for parametric minimization~\cite[Proposition A.22]{bertsekas1971control} to 
$\dd(y)=-\inf_x L(x,y)$,
obtaining
\begin{equation}\label{eq:DText}
\partial \dd(y)
= \operatorname{co}\Big\{-\nabla_{ y}L(x,y)\mid 
x \in \operatorname*{arg\,min}_{x} L(x,y)\Big\}
= \operatorname{co}(A \Prox_{\lambda f}(x+A^* y) - b).
\end{equation}

\emph{(iii) Differentiability under a unique proximal point.}
If $\Prox_{\lambda f}(v(y))$ is a singleton, then the convex set $\partial\dd(y)$ is a singleton, hence
$\dd$ is differentiable at $y$, consequently, \eqref{eq:subdiff-co} simplifies to \eqref{eq:grad-dual}.
\end{proof}

\begin{remark}

The application of \cite[Proposition~A.22]{bertsekas1971control} in the proof above formally assumes that $x$ lies in a compact set.  
However, inspecting the proof reveals that compactness is not essential: the argument still goes through as long as the minimizer set $\arg\min_x L(x,y)$ is nonempty and uniformly bounded.  
\end{remark}


\subsection{Convex proximal mapping}\label{subsec:cvx-prox}

In this subsection, we restrict our attention to the convex setting and characterize the necessary and sufficient conditions for dual-representability. These results form the basis for understanding the nonconvex case discussed in the next subsection. We make the following convex assumption throughout this subsection.

\begin{assumption}\label{assump:cvx}
    The function $f:\bX \to\Rbar$ is proper, closed and convex.
The linear operator $A:\bX \to\bY $ is surjective, 
and \(\mathcal{F}\neq\emptyset\).
\end{assumption}

The following rule provides the essential condition for establishing the dual representability of problem~\eqref{Prox}.

\begin{definition}[\bf Subdifferential sum rule]
Let \( f, g : \bX  \to \Rbar \) be proper, closed, and convex functions defined on a finite-dimensional Hilbert space $\bX$.  
The {subdifferential sum rule} (SSR) is said to hold for \( f \) and \( g \) at $x$ if $ x \in \dom f  \cap \dom g$ and
\[
\partial (f+g)(x) = \partial f(x) + \partial g(x).
\]
\end{definition}

We focus on the pair $(f, \delta_{\E})$. At any point $x\in \dom \fE=\F$, the subdifferential sum rule (SSR) takes the form
\[
\partial(\fE)(x)=\partial(f+\delta_{\E})(x)=\partial f(x) + {\cal N}_{\E}(x)=\partial f(x) + A^* \bY.
\]

The following proposition shows that, for a fixed $z$, the subdifferential sum rule (SSR) at an optimal solution is sufficient to guarantee dual-representability.

\begin{proposition}
\label{prop:SSR-dual-reprent}

Under Assumption~\ref{assump:cvx}, fix \(z\in\bX\) and let \(x^*\) be any solution of~\eqref{Prox}.  
If SSR holds for \(f\) and \(\delta_{\E}\) at \(x^*\), then~\eqref{Prox} is dual-representable at \(z\).

\end{proposition}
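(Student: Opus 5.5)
The plan is to use the first-order optimality condition for the convex problem \eqref{Prox}, split it with SSR, and then read off condition (iii) of Proposition~\ref{prop:SD-eq}.

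\emph{Optimality condition.} Under Assumption~\ref{assump:cvx}, problem \eqref{Prox} is the minimization of the proper, closed, strongly convex function $x\mapsto \lambda \fE(x)+\tfrac12\|x-z\|^2$. Since $x^*$ is a minimizer, Fermat's rule gives
\[
0\in \partial\big(\lambda \fE\big)(x^*) + x^*-z .
\]
Here the sum rule with the smooth quadratic term is always valid, because $\tfrac12\|\cdot-z\|^2$ is finite and differentiable everywhere. We also use $\partial(\lambda\fE)=\lambda\,\partial\fE$ for $\lambda>0$.

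\emph{Splitting with SSR.} By hypothesis, SSR holds for $f$ and $\delta_\E$ at $x^*$, so $\partial \fE(x^*)=\partial f(x^*)+A^*\bY$. Hence there exist $\xi\in\partial f(x^*)$ and $w\in\bY$ with
\[
0=\lambda\xi+\lambda A^*w+x^*-z .
\]
Set $y^*:=-\lambda w$; this rescaling uses that $A^*\bY$ is a subspace. Rearranging gives
\[
z+A^*y^*-x^*\in\lambda\,\partial f(x^*).
\]
Because $f$ is convex, this inclusion is exactly the optimality condition characterizing $x^*=\prox_{\lambda f}(z+A^*y^*)$. The proximal map is single-valued here, and the inclusion is both necessary and sufficient for minimizing the strongly convex function $\lambda f(\cdot)+\tfrac12\|\cdot-(z+A^*y^*)\|^2$.

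\emph{Conclusion.} Since $x^*$ solves \eqref{Prox}, it satisfies $Ax^*=b$. Therefore
\[
0=Ax^*-b\in A\,\prox_{\lambda f}(z+A^*y^*)-b,
\]
which is condition (iii) of Proposition~\ref{prop:SD-eq}. That proposition applies because convex, proper, lsc functions are prox-bounded with threshold $\lambda_f=\infty$, so Assumption~\ref{assump:basic} holds. Hence \eqref{Prox} is dual-representable at $z$.

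There is no real obstacle in this argument. The only points needing care are the justification of the sum rule with the quadratic term, which is standard since that term is finite and continuous, and the sign and scaling of the multiplier $y^*$.
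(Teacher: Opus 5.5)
Your proposal is correct and follows essentially the same route as the paper. You use Fermat's rule with the smooth quadratic term, split $\partial\fE(x^*)$ via SSR, rescale the multiplier to get $x^*=\prox_{\lambda f}(z+A^*y^*)$, and then invoke condition (iii) of Proposition~\ref{prop:SD-eq}. Your explicit check that a proper closed convex $f$ is prox-bounded with $\lambda_f=\infty$, so that Proposition~\ref{prop:SD-eq} applies, fills in a detail the paper leaves implicit.
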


\begin{proof}
Since $\tfrac{1}{2\lambda}\|\cdot - z\|^2$ is everywhere finite and continuous, and $\fE$ is proper, closed, and convex, the subdifferential calculus (see \cite[Exercise~10.10]{rockafellar1998variational}) implies that
\[
\partial(\fE+\tfrac{1}{2\lambda}\|\cdot -z\|^2)(x^*)
= \partial \fE(x^*) + \tfrac{1}{\lambda}(x^*-z)=\partial f(x^*)+A^*\bY +\tfrac{1}{\lambda}(x^*-z),
\]
where the last equality uses SSR of $f$ and $\delta_\E$ at \(x^*\).
By the first-order necessary condition of~\eqref{Prox} at $x^*$, there exists $y^* \in \bY$ such that
\[
0 \in \partial f(x^*) -\tfrac{1}{\lambda}A^*y^*+ \tfrac{1}{\lambda}(x^*-z)
\quad\Longleftrightarrow\quad 
x^*\in \Prox_{\lambda f}(z+A^*y^*).
\]
Since $Ax^*=b$, we have $0\in A \Prox_{\lambda f}(z+A^*y^*)-b$, 
which  
by (iii) of Proposition~\ref{prop:SD-eq}, implies that~\eqref{Prox} is dual-representable.
\end{proof}

Proposition~\ref{prop:SSR-dual-reprent} shows that SSR is sufficient to guarantee 
dual-representability at a fixed~$z$. 
Our next result reveals that no weaker condition suffices: from a global perspective, this condition is also necessary.

\begin{theorem}[\bf Convex dual representability]\label{thm:strong-chip}
Under Assumption~\ref{assump:cvx}, the following statements are equivalent:
\begin{enumerate}
[label=\bf (\roman*)]
\item $f$ and $\delta_{\E}$ satisfies SSR for all $x\in\F $.
\item \eqref{Prox} is dual-representable for every $z \in \bX $.
\end{enumerate}
\end{theorem}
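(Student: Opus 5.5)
For (i)$\Rightarrow$(ii), fix $z\in\bX$ and first show that \eqref{Prox} has a solution. The objective $\lambda\fE+\tfrac12\|\cdot-z\|^2$ is proper, because $\F\neq\emptyset$. It is also closed and strongly convex. Hence it has a unique minimizer $x^*\in\F$. By (i), SSR holds at $x^*$, so Proposition~\ref{prop:SSR-dual-reprent} gives dual representability at $z$.

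For (ii)$\Rightarrow$(i), fix $\bar x\in\F$ and $g\in\partial\fE(\bar x)$. The inclusion $\partial f(\bar x)+A^*\bY\subseteq\partial\fE(\bar x)$ always holds, since a sum of subgradients is a subgradient of the sum. It therefore suffices to show $g\in\partial f(\bar x)+A^*\bY$. The plan is to choose $z$ so that $\bar x$ is the solution of \eqref{Prox}, and set
\[
z:=\bar x+\lambda g .
\]
Then
\[
0=\lambda g-(z-\bar x)\in\partial\big(\lambda\fE+\tfrac12\|\cdot-z\|^2\big)(\bar x),
\]
where we use the sum rule with the everywhere finite quadratic term (\cite[Exercise~10.10]{rockafellar1998variational}). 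By convexity, $\bar x$ is therefore the unique solution of \eqref{Prox} at this $z$; equivalently, $\bar x=\Prox_{\lambda\fE}(z)$.

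Now apply (ii) at this $z$. By Proposition~\ref{prop:SD-eq}(iii) there exists $y^*$ with $0\in A\Prox_{\lambda f}(z+A^*y^*)-b$. Because $f$ is convex, $\Prox_{\lambda f}$ is single-valued, so $\hat x:=\Prox_{\lambda f}(z+A^*y^*)$ satisfies $A\hat x=b$. By Proposition~\ref{prop:SD-eq} this $\hat x$ solves \eqref{Prox} (it is the $x^*$ from (ii)$\Leftrightarrow$(i)), and uniqueness of the solution then forces $\hat x=\bar x$.

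It remains to read off the decomposition. The prox optimality condition for $\hat x=\bar x$ gives
\[
\tfrac1\lambda\big(z+A^*y^*-\bar x\big)\in\partial f(\bar x),
\]
that is, $g+\tfrac1\lambda A^*y^*\in\partial f(\bar x)$. Hence
\[
g\in\partial f(\bar x)-\tfrac1\lambda A^*y^*\subseteq \partial f(\bar x)+A^*\bY,
\]
which establishes SSR at $\bar x$.

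The main care point is that primal solutions are unique and coincide with the point produced by the dual inclusion. This follows from strong convexity together with the equivalence in Proposition~\ref{prop:SD-eq}.
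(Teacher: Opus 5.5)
Your proposal is correct and follows essentially the same route as the paper. For (i)$\Rightarrow$(ii) you invoke Proposition~\ref{prop:SSR-dual-reprent}; for (ii)$\Rightarrow$(i) you set $z=\bar x+\lambda g$, use uniqueness of the convex proximal point to identify the dual-recovered $\hat x$ with $\bar x$, and read $g\in\partial f(\bar x)+A^*\bY$ off the prox optimality condition. Your two variations are minor: you explicitly prove that a primal minimizer exists, which the paper leaves implicit, and you use Proposition~\ref{prop:SD-eq} (weak duality) rather than the inclusion $\partial f+\cN_{\E}\subseteq\partial\fE$ to see that $\hat x$ solves \eqref{Prox}.
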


\begin{proof}

\smallskip
\noindent
{(i) $\Rightarrow$ (ii).}
It directly follows from Proposition~\ref{prop:SSR-dual-reprent}.

\smallskip
\noindent
{(i) $\Leftarrow$ (ii).}
Conversely, assume that problem~\eqref{Prox} is dual-representable for every $z \in \bX $.
Let $x \in \F $ and $u \in \partial(\fE)(x)$ be arbitrary. 
Set $z := x + \lambda u$. Then we have
\[
0 \in \partial(\fE)(x) + \tfrac{1}{\lambda}(x - z)
\quad \Longleftrightarrow \quad
x \in \Prox_{\lambda \fE}(z).
\]
Since~\eqref{Prox} is dual-representable, there exist 
$y' \in \bY$ and $x' = \Prox_{\lambda f}(z +  A^*y')$ such that $Ax' = b$.
The optimality condition of the proximal mapping implies
\[
0 \in \partial f(x') + \tfrac{1}{\lambda}\big(x' - (z +  A^*y')\big)
\quad \Longleftrightarrow \quad
0\in   \partial f(x') - \tfrac{1}{\lambda}A^*y'+\tfrac{1}{\lambda}(x'-z).
\]
Since $A^*y' \in \cN_{\E}(x')$, it follows that
\begin{equation}\label{eq:partial-inclusion}
    0\in \big( \partial f(x') + \cN_{\E}(x') \big)+\tfrac{1}{\lambda}(x'-z)\subseteq \partial(  \fE)(x') +\tfrac{1}{\lambda}(x'-z),
\end{equation}
where the last inclusion follows from~\cite[Theorem~23.8]{rockafellar1970convex}. Hence, we have $x'\in\Prox_{\lambda \fE}(z)$.
By the uniqueness of the convex proximal subproblem, we conclude that $x = x'$. Applying \eqref{eq:partial-inclusion} again yields
\[
u = \tfrac{1}{\lambda}(z - x)
\in \partial f(x) + \cN_{\E}(x).
\]
Since $u$ is arbitrary in $\partial(\fE)(x)$, we have
\[
 \partial(\fE)(x)\subseteq \partial f(x) + \cN_{\E}(x), \quad \forall\, x \in \F .
\]
Because the reverse inclusion 
always holds from~\cite[Theorem~23.8]{rockafellar1970convex}, we conclude that SSR holds for all \(x \in \F \).
\end{proof}

While SSR constitutes the weakest regularity condition ensuring the dual representability of problem \eqref{Prox}, directly verifying SSR in concrete applications can be nontrivial. Therefore, we recall several stronger but more tractable regularity assumptions that automatically imply SSR.

{
The next proposition collects several classical regularity conditions that are commonly invoked in variational analysis. Importantly, in the context of dual-representability, we show that these conditions form a strict hierarchy, culminating in SSR as the weakest sufficient requirement. 
}

\begin{proposition}[\bf Hierarchy of regularity conditions]\label{prop:regularity-hierarchy}
Under Assumption~\ref{assump:cvx}, consider the following conditions at $x\in\F $:
\begin{enumerate}
[label=\bf (\arabic*)]
\item Subdifferential sum rule (SSR):
\begin{equation*}
    \partial(\fE)(x) = \partial f(x) + A^*\bY .
\end{equation*}
\item Restricted subdifferential sum rule (R-SSR):
\begin{align*}
    \partial(\fE)(x)
        &= \partial f(x) + A^*\bY,\\
        \partial^\infty (\fE)(x)&=\partial^\infty f(x) + A^*\bY.
\end{align*}
    \item Epigraphical strong conical hull intersection property (epi-CHIP):
    \begin{equation*}
        \cN_{\epi \fE}(x,f(x))=\cN_{\epi f}(x,f(x)) + \cN_{\E\times \R}(x,f(x)).
    \end{equation*}
\item   Epigraphical transversality:
\[
\cN_{\epi f}(x,f(x)) \cap \cN_{\E\times\mathbb{R}}(x,f(x)) = \{0\},
\]
or equivalently,
\[
\T_{\epi f}(x,f(x)) + \T_{\E\times\mathbb{R}}(x,f(x)) = \bX \times \mathbb{R}.
\]
\item  Horizon qualification condition:
\[
\partial^{\infty} f(x) \cap A^{*}\bY = \{0\}.
\]
\item   Robinson constraint qualification (RCQ):
\begin{equation}\label{RCQ}
    \T_{f}(x)+\ker A=\bX
\quad \Longleftrightarrow\quad 
A\T_{f }(x) = \bY,
\end{equation}
where 
\[
\T_{f }(x):=\{v\mid(v,h)\in \T_{{\rm epi}\,f}(x,f(x))\}=\dom {\rm d}f(x),
\]
and the last equality follows from \cite[Theorem 8.2]{rockafellar1998variational}. 
\item Robinson constraint nondegeneracy:
\[
\lin \; \T_{f}(x)+\ker A=\bX
\quad \Longleftrightarrow\quad A\big(\lin\; \T_{f}(x)\big) = \bY.
\]
\end{enumerate}
The following implications hold:
\[
\begin{aligned}
(1)
\; \Longleftarrow\;
\big[(2)
\; \Longleftrightarrow\; (3)\big]
\;& {\Longleftarrow}\;  \big[ (4)
\; \Longleftrightarrow\; (5)\; {\Longleftrightarrow}\; (6)\big]
\; \Longleftarrow\; (7).
\end{aligned}
\]
\end{proposition}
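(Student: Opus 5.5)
We prove the chain from right to left, working throughout with the epigraphical description of subgradients of convex functions. We rely on the facts \cite[Theorem~8.9]{rockafellar1998variational}: $v\in\partial f(x)$ iff $(v,-1)\in\cN_{\epi f}(x,f(x))$, and $v\in\partial^\infty f(x)$ iff $(v,0)\in\cN_{\epi f}(x,f(x))$. The same statements hold for $\fE$, whose epigraph is $\epi f\cap(\E\times\R)$ because $\E$ is affine. We also use $\cN_{\E\times\R}(x,f(x))=A^*\bY\times\{0\}$.

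\emph{(7) $\Rightarrow$ (6).} This is immediate, since $\lin\T_f(x)\subseteq \T_f(x)$.

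\emph{Equivalence of (4), (5), (6).} We compare (4) with (5) by intersecting $\cN_{\epi f}(x,f(x))$ with $A^*\bY\times\{0\}$. Since that set has last coordinate $0$, the intersection is exactly $\{(v,0): v\in\partial^\infty f(x)\cap A^*\bY\}$, so (4) and (5) are the same condition. The two forms of (4) are polar to each other: for closed convex cones $K_1,K_2$ we have $(K_1+K_2)^\circ=K_1^\circ\cap K_2^\circ$, and $K_1+K_2$ equals the whole space iff its polar is $\{0\}$. Here no closure is needed, because a convex cone whose closure is the whole space is the whole space.

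For (6) $\Leftrightarrow$ (4) we use $\T_{\epi f}+\T_{\E\times\R}=\T_{\epi f}+(\ker A\times\R)$. Projecting onto $\bX$ shows that this sum is $\bX\times\R$ iff $\T_f(x)+\ker A=\bX$; the $\R$-component is absorbed by the factor $\R$ of $\T_{\E\times\R}$. The equivalence with $A\T_f(x)=\bY$ is elementary linear algebra, using surjectivity of $A$.

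\emph{(4) $\Rightarrow$ (3).} We take $K_1=\cN_{\epi f}(x,f(x))$ and $K_2=A^*\bY\times\{0\}$. The inclusion $K_1+K_2\subseteq \cN_{\epi \fE}$ always holds. Condition (4) gives $K_1\cap(-K_2)=\{0\}$, so $K_1+K_2$ is closed by the standard closedness criterion for sums of closed convex cones \cite[Corollary~9.1.3]{rockafellar1970convex}. Taking polars of $\T_{\epi \fE}=\T_{\epi f}\cap(\ker A\times \R)$ then gives $\cN_{\epi\fE}=\overline{K_1+K_2}=K_1+K_2$. This tangent-cone intersection formula itself needs justification. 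We obtain it by applying the normal-cone intersection rule \cite[Theorem~6.42]{rockafellar1998variational} directly under transversality (4), which yields $\cN_{\epi\fE}\subseteq K_1+K_2$.

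\emph{(2) $\Leftrightarrow$ (3) and (2) $\Rightarrow$ (1).} We slice the cone identity (3) at last coordinate $-1$ and at $0$. For $(u,-1)\in \cN_{\epi\fE}$, (3) writes $(u,-1)=(v,-1)+(A^*y,0)$ with $(v,-1)\in K_1$. This says $u=v+A^*y$ with $v\in\partial f(x)$, which is the first line of (2). At last coordinate $0$ the same argument gives the horizon equation, the second line of (2). Conversely, (2) gives (3) slice by slice. This uses that every element of $\cN_{\epi\fE}(x,\fE(x))$ has last coordinate $\le 0$ and can be rescaled to coordinate $-1$ or $0$, together with the analogous structure of $K_1$. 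Finally, (2) $\Rightarrow$ (1) holds trivially, since (1) is the first line of (2).

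The main obstacle is the step (4) $\Rightarrow$ (3), namely the closedness of the cone sum and the exact polar/normal-cone intersection formula. The rest is bookkeeping with the slices $\tau=-1$ and $\tau=0$ and with the elementary polarity identities. A secondary subtlety arises when $x\in\F$ lies where $\partial f(x)=\emptyset$. In that case only the horizon slice is nontrivial, and we must check that (2) $\Leftrightarrow$ (3) still holds, which the slice-by-slice argument handles.
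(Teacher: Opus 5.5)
Your proof is correct and follows essentially the same route as the paper. You use the epigraphical description of (horizon) subgradients for (4)$\Leftrightarrow$(5) and for the slice-by-slice proof of (2)$\Leftrightarrow$(3), polarity together with $\T_{\E\times\R}(x,f(x))=\ker A\times\R$ for (4)$\Leftrightarrow$(6), and the normal-cone intersection rule for (4)$\Rightarrow$(3). The only remark is that your closedness argument for $K_1+K_2$ in (4)$\Rightarrow$(3) is redundant: the intersection rule under (4) already gives $\cN_{\epi \fE}(x,f(x))\subseteq K_1+K_2$ directly, and with the trivial reverse inclusion this is exactly (3), as in the paper.
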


\begin{proof}
The implication $(1)\Leftarrow(2)$ is immediate.  
The implication $(3)\Leftarrow(4)$ follows from the normal–cone intersection property;  
see, for example,~\cite[Theorem~6.42]{rockafellar1998variational}.  
The equivalence  $(4) \Leftrightarrow(5)$ follows from  
\cite[Theorem~8.9]{rockafellar1998variational}, which connects epigraphical normals with  
(horizon) subgradients. The equivalence \((4)\Leftrightarrow(6)\) follows from the tangent-cone description of (4) together with the identity
\(\T_{\E\times\R}(x,f(x)) = \ker A \times \R\).
The implication $(6)\Leftarrow(7)$ follows from standard  
results concerning constraint qualifications. 

We now prove the equivalence between (2) and (3). 
Recall that $\fE = f + \delta_{\E}$ and 
\[
\epi \fE = \epi f \cap \big(\E\times \R\big),\quad 
\cN_{\E\times\R}(x,f(x)) = \cN_{\E}(x)\times\{0\}.
\]

\noindent{(2) $\Rightarrow$ (3).}
Assume that R--SSR holds at $x$.
Let $(v,\alpha)\in \cN_{\epi \fE}(x,f(x))$ be arbitrary.  
By \cite[Theorem~8.9]{rockafellar1998variational}, either $\alpha<0$ and $(v,\alpha)=\lambda(\tilde v,-1)$ for some $\lambda>0$, $\tilde v\in\partial \fE(x)$, or $\alpha=0$ and $(v,0)$ is a horizon normal with $v\in\partial^\infty \fE(x)$.

Consider first the case $\alpha<0$.  
Then $(v,\alpha)=\lambda(\tilde v,-1)$ with $\tilde v\in\partial \fE(x)$.  
By R--SSR, we can write $\tilde v = v_f + v_A$ with $v_f\in\partial f(x)$ and $v_A\in A^*\bY =\cN_{\E}(x)$.  
Hence
\[
(v,\alpha)
= \lambda(v_f,-1) + \lambda(v_A,0)
\in \cN_{\epi f}(x,f(x)) + \cN_{\E}(x)\times\{0\}.
\]
The case $\alpha=0$ is analogous: if $v\in\partial^\infty \fE(x)$, then by R--SSR
$v = v_f^\infty + v_A^\infty$ with $v_f^\infty\in\partial^\infty f(x)$ and 
$v_A^\infty\in A^*\bY =\cN_{\E}(x)$. Thus
\[
(v,0) = (v_f^\infty,0) + (v_A^\infty,0)
\in \cN_{\epi f}(x,f(x)) + \cN_{\E}(x)\times\{0\}.
\]
Therefore, we obtain
\[
\cN_{\epi \fE}(x,f(x))
\subset \cN_{\epi f}(x,f(x)) + \cN_{\E}(x)\times\{0\}.
\]
The reverse inclusion always holds for convex sets (intersection of $\epi f$ and $\E\times\R$)~\cite[Theorem~23.8]{rockafellar1970convex}, so equality follows and (3) holds.

\medskip

\noindent{(3) $\Rightarrow$ (2).}
Conversely, assume that epi--CHIP holds at $x$.
Let $u\in\partial \fE(x)$ be arbitrary.  
By \cite[Theorem~8.9]{rockafellar1998variational}, $(u,-1)\in\cN_{\epi \fE}(x,f(x))$, so there exist
$(v_f,-1)\in\cN_{\epi f}(x,f(x))$ and $(v_A,0)\in\cN_{\E}(x)\times\{0\}$ such that
\[
(u,-1) = (v_f,-1) + (v_A,0).
\]
By \cite[Theorem~8.9]{rockafellar1998variational} again, $v_f\in\partial f(x)$. 
Hence $u=v_f+v_A\in \partial f(x)+A^*\bY $.  
Since $u\in\partial \fE(x)$ is arbitrary, we conclude that
\[
\partial \fE(x) \subset \partial f(x) + A^*\bY .
\]
The reverse inclusion always holds for convex functions (because $\fE=f+\delta_{\E}$)~\cite[Theorem~23.8]{rockafellar1970convex}, so we obtain
$\partial \fE(x)=\partial f(x)+A^*\bY $.

The same argument with $(u,0)$ and the horizon normals in \cite[Theorem~8.9]{rockafellar1998variational} shows that
\[
\partial^\infty \fE(x) = \partial^\infty f(x) + A^* \bY .
\]
Thus R--SSR holds at $x$, and (2) follows. 
This proves the equivalence of (2) and (3).
\end{proof}

\begin{remark}
    Robinson constraint nondegeneracy ensures the uniqueness of the multiplier $y^*$ associated with the inclusion problem~\eqref{eq:dual-repre}, and hence the uniqueness of the dual representation. 
\end{remark}

\begin{remark}
The epi-CHIP is not identical to the classical epigraphical extension of strong CHIP~\cite{deutsch1997dual}, as we have replaced $\mathbb{R}_{+}$ by $\mathbb{R}$ in the vertical direction to align the geometric condition with the subdifferential sum rule (SSR). This modification slightly relaxes the classical intersection structure; however, when $f=\delta_C$ for a closed convex set $C$, the epi-CHIP reduces exactly to the classical strong CHIP for the pair $(C,\E)$. The following example shows that the replacement of $\mathbb{R}_{+}$ by $\mathbb{R}$ is in fact necessary: if one keeps $\mathbb{R}_{+}$, the epigraphical intersection may become empty even though the SSR still holds.
\end{remark}

\begin{example}[\bf Necessity of using $\mathbb{R}$ instead of $\mathbb{R}_+$]
\label{ex:epi-CHIP-R}
Let $\bX=\mathbb{R}$, $A(x)=x$, $b=0$, and $f(x)\equiv -1$. 
Then $\operatorname{epi}f=\{(x,t)\mid t\ge -1\}$ and 
$\E\times\mathbb{R}_{+}=\{(0,t)\mid t\ge0\}$. 
At \((x,f(x))=(0,-1)\), the point does not belong to the classical epigraphical intersection \(\operatorname{epi}f\cap(\E\times\mathbb{R}_{+})\), so the normal cone of this set at \((0,-1)\) is not defined, 
although the SSR,
$\partial(\fE)(x)=\partial f(x)+A^{*}\bY$, still trivially holds. 
Replacing $\mathbb{R}_{+}$ by $\mathbb{R}$ restores a meaningful intersection
and recovers the geometric equivalence with the SSR, as illustrated in Figure~\ref{fig:epi-intersection}.

\begin{figure}[ht!]
    \centering
    \includegraphics[width=1\textwidth]{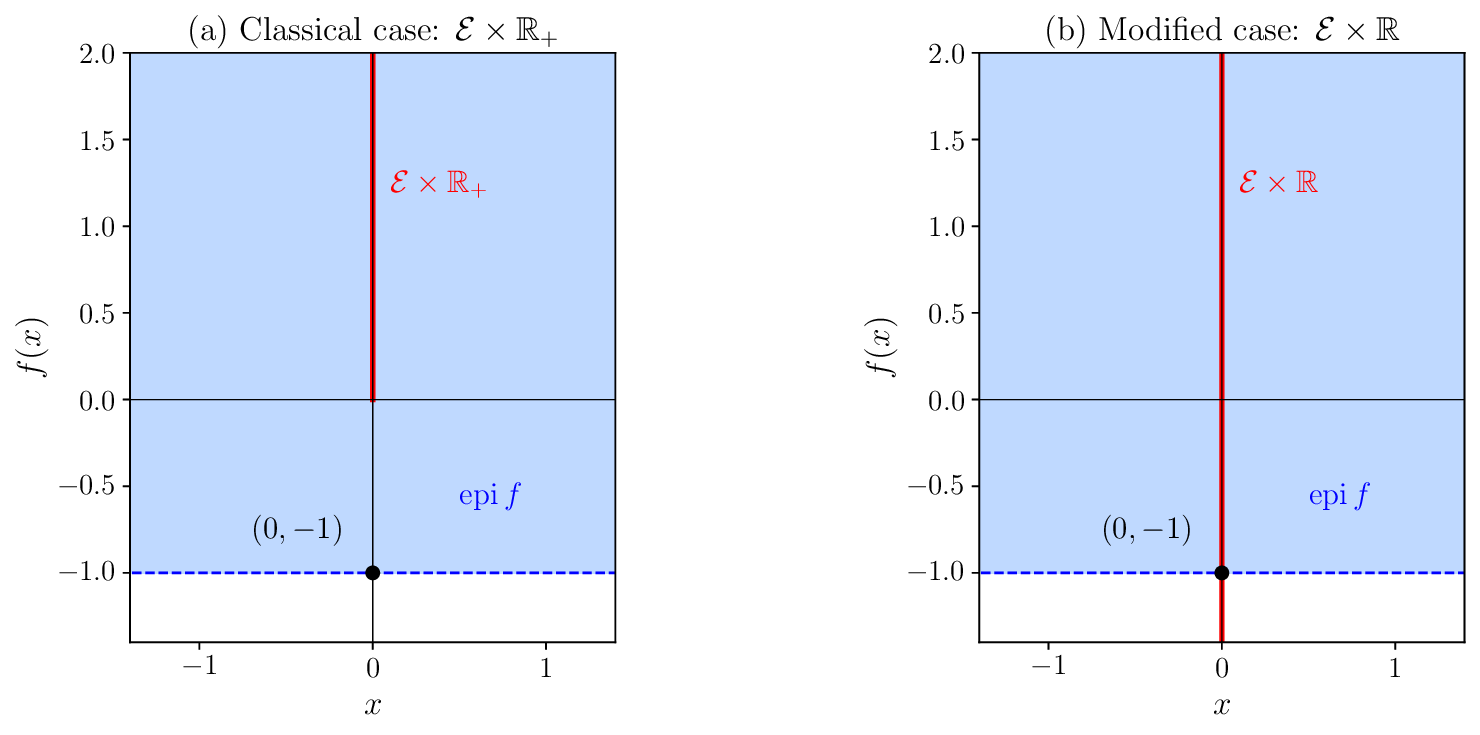}
    \caption{Epigraphical intersection in Example~\ref{ex:epi-CHIP-R}: 
replacing $\mathbb{R}_{+}$ by $\mathbb{R}$ restores a well-defined 
normal cone at $(0,-1)$
and ensures SSR.
}
\label{fig:epi-intersection}
\end{figure}

\end{example}

Based on Proposition~\ref{prop:SD-eq}, ~\ref{prop:basic-props-dual} and~\ref{prop:regularity-hierarchy}, the relationships among the various regularity conditions in the convex case are illustrated in Figure~\ref{fig:regularity-hierarchy}.

\begin{figure}[ht!]
\centering
\begin{tikzpicture}[
  node distance=7mm,
  every node/.style={font=\small, align=center},
  block/.style={
    draw,
    fill=gray!6,
    text width=11cm,
    inner sep=5pt,      
    inner ysep=6pt,     
    rounded corners=2pt
  },
  arrow/.style={{Latex}-, thick}
]

\node[block] (A) {\textbf{Dual-representability:}\\[-3pt]
    \hrulefill\\[2pt]
     Attainable strong duality\\[-2pt]
  \(\Updownarrow\) \\
     Lagrangian saddle point\\[-2pt]
  \(\Updownarrow\) \\
     Dual inclusion\\[-2pt]
  \(\Updownarrow\) \\
     Subdifferential sum rule (SSR)
};

\node[block, below=of A] (C) {
  Restricted subdifferential sum rule (R-SSR)\\[-2pt]
  \(\Updownarrow\) \\
  Epigraphical strong conical hull intersection property (epi-CHIP)
};

\node[block, below=of C] (D) {
  Epigraphical transversality \\[-2pt]
  \(\Updownarrow\) \\
  Horizon qualification condition\\[-2pt]
  \(\Updownarrow\) \\
  Robinson constraint qualification (RCQ)
};

\node[block, below=of D] (E) {
  Robinson constraint nondegeneracy
};

\draw[arrow] (A.south) -- (C.north);
\draw[arrow] (C.south) -- (D.north);
\draw[arrow] (D.south) -- (E.north);

\end{tikzpicture}
\caption{Convex hierarchy of regularity conditions.}
\label{fig:regularity-hierarchy}
\end{figure}
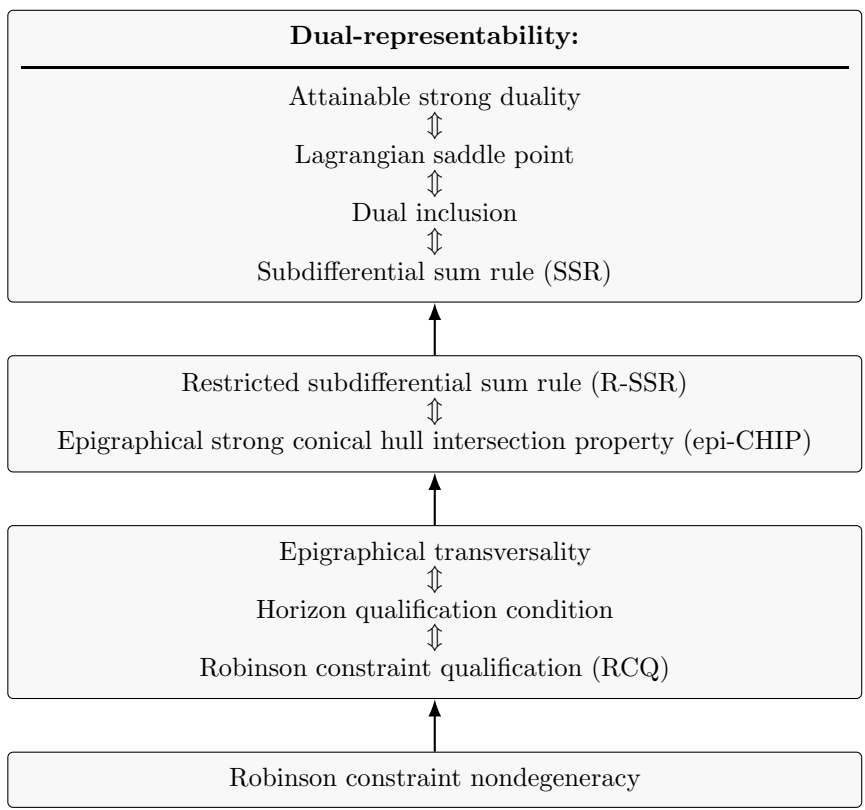

{
We note that the hierarchy of regularity conditions in Proposition~\ref{prop:regularity-hierarchy} extends similarly to nonconvex functions by replacing equalities with inclusions and using limiting subgradients or normal cones. However, without convexity, SSR no longer suffices to guarantee dual-representability, and additional structure is needed. We develop this extension in the next subsection.
}


\subsection{Nonconvex proximal mapping}\label{subsec:nonconvex-prox}

In this subsection, we study proximal mappings associated with a general, possibly nonconvex function \( f\). 
Beyond the convex setting, the dual representability of \eqref{Prox} requires additional variational properties. The key notion that enables this extension is prox-regularity, which provides a local equivalence between subdifferential inclusions and the proximal mapping.
For completeness, we recall its definition below; see \cite[Definition~13.27]{rockafellar1998variational}. Several other equivalent characterizations of prox-regular functions can be
found in \cite[Theorem~1.3]{poliquin2000local}.

\begin{definition}[\bf Prox-regularity]\label{def:prox-regular}
A function $f:\bX\rightarrow\overline{\mathbb{R}}$ is prox-regular at $\bar{x}$ for $\bar{v}$ if $f$ is finite and locally lsc at $\bar{x}$ with $\bar{v}\in\partial f(\bar{x})$, and there exist $\epsilon>0$ and $\rho\geq 0$ such that
$$f(x')\geq f(x)+\langle v,x'-x\rangle-\frac{\rho}{2}\|x'-x\|^2\quad\mbox{for all}\;\;x'\in\mathbb{B}_{\epsilon}(\bar{x})$$
when 
$x\in\mathbb{B}_{\epsilon}(\bar{x})\cap\{ x\mid f(x)<f(\bar{x})+\epsilon\}$, $v\in\partial f(x) \cap \mathbb{B}_{\epsilon}(\bar{v})$. A set ${\cal S}$ is prox-regular {at $\bar x$ for $\bar v$ if the indicator function}  $\delta_{\cal S}$ is prox-regular {at $\bar x$ for $\bar v$}. 
\end{definition}

Prox-regularity is a widely used variational property that relaxes weak convexity. Many commonly encountered functions are prox-regular, including all (weakly) convex functions \cite[Example 13.30]{rockafellar1998variational}, indicator functions $\delta_{\mathcal K}(g(x))$ under RCQ \cite[Proposition 2.2]{levy2000stability}, lower $C^{2}$ and more generally $C^{1,1}$ functions \cite[Proposition 13.34]{rockafellar1998variational}, as well as strongly amenable functions \cite[Example 13.32]{rockafellar1998variational}.

Next, we introduce the definition of nonconvex subdifferential sum rule (NSSR) of functions, which is an extension of SSR for nonconvex functions. 
\begin{definition}[\bf Nonconvex subdifferential sum rule]
Let $f, g : \bX \rightarrow \Rbar $ be lsc
functions. 
The nonconvex subdifferential sum rule (NSSR)
is said to hold for $f$ and $g$ at $\bar x\in \dom f \cap \dom g$ if there exist $\tau,\delta\in (0,+\infty)$ such that 
\begin{equation}\label{eq:ext}
            \partial ( f+g)({x})
            \cap\mathbb{B}_d (0)\subseteq\partial  f({x})\cap\tau\mathbb{B}_d (0)+\partial  g(x)\cap\tau\mathbb{B}_d (0)\end{equation}
        holds for all $x\in\mathbb{B}_\delta(\bar{x})\cap \dom f\cap \dom g$ and $d\in(0,1]$.
\end{definition}

\begin{remark}
    If \(f\) and \(g\) are indicator functions of two closed sets, then NSSR is implied by the subtransversality of the underlying sets; see \cite[Theorem~3.3]{wei2024subtransversality}.
\end{remark}

We focus on the pair $(f,\delta_\E)$. 
Similar to the convex case, NSSR is a relatively weak condition and can be implied by several other constraint qualifications. We characterize some of these conditions below.

\begin{proposition}[\bf Hierarchy of regularity conditions]\label{Hierarchy-nonconvex}
   Under Assumption~\ref{assump:basic},
consider the following conditions at $\bar x\in\F$:
    \begin{enumerate}
    [label=\bf (\arabic*)]
    \item Nonconvex subdifferential sum rule (NSSR) of $f$ and $\delta_\E$ at $\bar x$.  
    \item Epigraphical subtransversality: there exist $\tau,\delta\in (0,+\infty)$ such that
        \begin{equation*}
            \dist (t,\epi \fE)
        \;\leq\; \tau 
        \dist(t,\epi f)
        +\tau \dist(t,\E\times \mathbb{R})
        \qquad \forall\; t\in \bB_{\delta}(\bar x,f(\bar x)).
        \end{equation*}
        \item Epigraphical transversality: 
        \[
        \cN_{\epi f}(\bar x, f(\bar x)) \cap \cN_{\E\times\mathbb{R}}(\bar x, f(\bar x)) = \{0\},
        \]
        \item Horizon qualification condition:
        \[
\partial^{\infty} f(\bar x) \cap A^{*}\bY = \{0\}.
\]
\item   Robinson constraint qualification (RCQ):
\begin{equation}\label{eq:epircq-nonconconvex}
    \T_{f}(\bar x)+\ker A=\bX
\quad \Longleftrightarrow\quad 
A\T_{f }(\bar x) = \bY,
\end{equation}
{where 
\[
\T_{f }(\bar x):=\{v\mid(v,h)\in \T_{{\rm epi}\,f}(\bar x,f(\bar x))\}=\dom {\rm d}f(\bar x),
\]}
and the last equality follows from \cite[Theorem 8.2]{rockafellar1998variational}. 
\item Robinson constraint nondegeneracy:
\[
\lin \; \T_{f}(\bar x)+\ker A=\bX
\quad \Longleftrightarrow\quad A\big(\lin\; \T_{f}(\bar x)\big) = \bY.
\]
    \end{enumerate}
The following implications hold:
\[
\begin{aligned}
(1)
\; \Longleftarrow\;
(2)
\; \Longleftarrow\; 
\big[(3)
\;\Longleftrightarrow\;  
(4)
\; \overset{\text{regular}}{\xLongleftrightarrow{\quad}}
\; 
(5)\big]
\Longleftarrow\; 
(6),
\end{aligned}
\]   
where ``regular" denotes that $f$  is epigraphically regular at $\bar x$.
\end{proposition}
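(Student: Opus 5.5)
We prove the implications from right to left, handling the easy tangent/normal-cone equivalences first and the metric-regularity steps last.

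\emph{(6) $\Rightarrow$ (5).} Since $\lin\,\T_f(\bar x)\subseteq \T_f(\bar x)$, Robinson constraint nondegeneracy gives $A\T_f(\bar x)\supseteq A(\lin\,\T_f(\bar x))=\bY$. This is immediate.

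\emph{(3) $\Leftrightarrow$ (4).} We use $\cN_{\E\times\R}(\bar x,f(\bar x))=A^*\bY\times\{0\}$. A nonzero element of the intersection in (3) therefore has the form $(v,0)$ with $0\neq v\in A^*\bY$ and $(v,0)\in\cN_{\epi f}(\bar x,f(\bar x))$. By \cite[Theorem~8.9]{rockafellar1998variational}, this last condition means exactly $v\in\partial^\infty f(\bar x)$. So (3) fails iff (4) fails.

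\emph{(4) $\Leftrightarrow$ (5) under epigraphical regularity.} When $\epi f$ is Clarke regular at $(\bar x,f(\bar x))$, the normal cone equals the polar of the tangent cone, $\cN_{\epi f}=\T_{\epi f}^{\circ}$, and $\T_{\epi f}$ is a closed convex cone. Hence the set of $v$ with $(v,0)\in\cN_{\epi f}$ is the polar of the projection $\T_f(\bar x)$, possibly after taking a closure. Transversality (3) is then equivalent to
\[
\T_{\epi f}(\bar x,f(\bar x))+\ker A\times\R=\bX\times\R,
\]
by the polar of an intersection of closed convex cones. Projecting onto $\bX$ gives $\T_f(\bar x)+\ker A=\bX$. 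Because $\ker A$ is a subspace and the sum is a convex cone, a sum that is dense and equal to the whole space causes no closure issue in finite dimensions. The converse direction is obtained by reversing the polarity argument.

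\emph{(3) $\Rightarrow$ (2).} Transversality of the closed sets $\epi f$ and $\E\times\R$ at the common point is the limiting normal qualification condition. By standard results (e.g.\ Kruger's transversality theory, or \cite[Theorem~6.42]{rockafellar1998variational} combined with metric regularity of $(x,y)\mapsto x-y$ restricted to the product), it yields local linear regularity of the pair, that is, exactly the distance estimate in (2). I would cite this as a known result on transversality implying subtransversality.

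\emph{(2) $\Rightarrow$ (1).} This is the main obstacle. The plan is to apply the fuzzy/limiting sum rule for the intersection under subtransversality (cf.\ the remark citing \cite[Theorem~3.3]{wei2024subtransversality}) to the epigraphs. Subtransversality at $(\bar x,f(\bar x))$ persists at nearby points of $\epi\fE$ with the same constant $\tau$. It gives a normal cone estimate with bounded decomposition: every $w\in\cN_{\epi \fE}(x,\fE(x))\cap\bB$ can be written as $w_1+w_2$ with $w_1\in\cN_{\epi f}\cap\tau\bB$ and $w_2\in\cN_{\E\times\R}\cap\tau\bB$. For $u\in\partial\fE(x)$ with $\|u\|<d\le1$, take $w=(u,-1)/\|(u,-1)\|$ and decompose. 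Since $\cN_{\E\times\R}$ has zero last component, $w_1$ has last component $-1/\|(u,-1)\|\in[-1,-1/\sqrt2]$. Rescaling by $\|(u,-1)\|\le\sqrt2$ then gives $u=v_f+v_A$ with $v_f\in\partial f(x)$ and $v_A\in A^*\bY$, both of norm $\le\sqrt2\tau$. The technical delicacy lies in three places: ensuring $f(x)$ is close to $f(\bar x)$, which restricts to $f$-attentive neighbourhoods and may require adjusting $\delta$; obtaining the stated $d$-scaled ball, where I would accept the constant $\sqrt2\tau$ and enlarge $\tau$; and checking that the bounded decomposition holds for limiting normals rather than only proximal ones, which requires a limiting argument using boundedness to extract convergent subsequences.
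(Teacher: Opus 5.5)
Your route follows the paper's proof for every arrow, and you flag exactly the two places where the final step $(2)\Rightarrow(1)$ breaks. However, your proposed patches do not repair them. The statement as written is in fact false at those points, and the paper's proof shares both defects.

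\textbf{What matches the paper.} You get $(6)\Rightarrow(5)$ from $\lin\,\T_f(\bar x)\subseteq\T_f(\bar x)$. You get $(3)\Leftrightarrow(4)$ from \cite[Theorem~8.9]{rockafellar1998variational} together with $\cN_{\E\times\R}=A^*\bY\times\{0\}$. You get $(3)\Leftrightarrow(5)$ by polarity under epigraphical regularity; your version, using $(K_1+K_2)^\circ=K_1^\circ\cap K_2^\circ$ and projecting $\T_{\epi f}$ onto $\bX$, is more explicit than the paper's and is correct. You take $(3)\Rightarrow(2)$ by citation. For $(2)\Rightarrow(1)$ you apply the normal-cone estimate of \cite[Theorem~3.3]{wei2024subtransversality} to $(u,-1)/\|(u,-1)\|$, as the paper does.

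\textbf{The $d$-scaled ball.} The decomposition only yields $\|v_f\|,\|v_A\|\le\tau\sqrt{1+\|u\|^2}$. This bound stays of order $\tau$ however small $d$ is, because the $-1$ in the last coordinate is fixed. Since $\partial f(x)$ is not a cone, no rescaling converts it into $\tau d$, so enlarging $\tau$ does not help. The paper's ``scaling argument'' for $d\in(0,1)$ has the same defect, and the $d$-scaled inclusion cannot be derived from (2) at all. Take $\bX=\R^2$, $\bY=\R$, $Ax=x_1$, $b=0$, $f(x)=x_1$, $\bar x=0$. Then $\fE=\delta_\E$, so $0\in\partial\fE(x)=\R\times\{0\}$ for all $x\in\E$. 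But $\partial f(x)=\{(1,0)\}$ does not meet $\tau\bB_d(0)$ once $\tau d\le 1$. Hence NSSR fails for every $\tau,\delta$, even though $f$ is smooth and (2)--(6) all hold.

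\textbf{The $f$-attentive localization.} This is likewise not a technicality. NSSR quantifies over all $x\in\bB_\delta(\bar x)\cap\F$, whereas (2) only controls $\epi f$ near $(\bar x,f(\bar x))$, and shrinking $\delta$ does not help when $f$ jumps. Take $Ax=x_2$, $b=0$, $f(x)=0$ for $x_1\le 0$, and $f(x)=1-\min\{|x_2|^{1/2},1/2\}$ for $x_1>0$. Near the origin $\epi f$ coincides with $\{(x,t)\mid x_1\le0,\ t\ge0\}$, so (2)--(6) hold at $\bar x=0$. Yet for every $s>0$ we have $\partial f(s,0)=\emptyset$ while $0\in\partial\fE(s,0)=\{0\}\times\R$.

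\textbf{What is actually proved.} Your argument, like the paper's, establishes only the fixed-ball inclusion $\partial\fE(x)\cap\bB_1(0)\subseteq\partial f(x)\cap\sqrt{2}\tau\bB_1(0)+A^*\bY\cap\sqrt{2}\tau\bB_1(0)$, and only at points with $(x,f(x))\in\bB_\delta(\bar x,f(\bar x))$. A correct hierarchy requires weakening NSSR to this form and then re-examining its use in the nonconvex dual-representability theorem. It cannot be obtained by deriving NSSR as currently stated.
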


\begin{proof}
The implication \((2)\Leftarrow(3)\) is well-known in 
the transversality literature, see, for example,~\cite{kruger2017subtransversality}.
The equivalence \((3)\Leftrightarrow(4)\) follows from \cite[Theorem~8.9]{rockafellar1998variational}, which links epigraphical normals with (horizon) subgradients.  
Under epigraphical regularity, the equivalence \((3)\Leftrightarrow(5)\) follows by taking the polar of \eqref{eq:epircq-nonconconvex}.  
The implication \((5)\Leftarrow(6)\) is a direct consequence of standard constraint qualification results.

We now establish the implication \((1)\Leftarrow(2)\). Assume that subtransversality holds for ${\rm epi} f$ and $\E\times\R$ at $\bar x$. We only treat the case \(d=1\) since the case \(d\in(0,1)\) follows immediately by a scaling argument. It follows from \cite[Theorem~3.3]{wei2024subtransversality} that there exists $\tau, \delta\in(0,+\infty)$ such that 
\begin{equation}\label{eq:wst}
{\cal N}_{\epi \fE}(x, \eta)\cap\mathbb{B}_1(0)\subseteq\tau({\cal N}_{{\rm epi}\,f}(x,\eta)\cap\mathbb{B}_1(0)+{\cal N}_{\E\times\mathbb{R}}(x,\eta)\cap\mathbb{B}_1(0))
\end{equation}
holds for all {$(x,\eta)\in\mathbb{B}_{\delta}(\bar{x},f(\bar x))\cap \epi \fE$. }
Suppose \(v\in\partial \fE({x})\). By \cite[Theorem~8.9]{rockafellar1998variational}, we have
$$
\left(\frac{v}{\sqrt{\|v\|^2+1}},\frac{-1}{\sqrt{\|v\|^2+1}}\right)\in {\cal N}_{\epi \fE}({x}, f( x))\cap\mathbb{B}_1(0)
$$
Combined with \eqref{eq:wst}, 
there exists $(v_1,w_1)\in {\cal N}_{{\rm epi}\,f}({x},f( x))\cap\tau\mathbb{B}_1(0) $ and $
  (v_2,w_2)\in \big({\cal N}_{\E}({x})\times\{0\}\big)\cap\tau\mathbb{B}_1(0),$ such that
\[
\left(\frac{v}{\sqrt{\|v\|^2+1}},\frac{-1}{\sqrt{\|v\|^2+1}}\right)=(v_1,w_1)+(v_2,w_2).
\]
Since \(w_2=0\), we obtain \(w_1=-1/\sqrt{\|v\|^{2}+1}\).  
By \cite[Theorem~8.9]{rockafellar1998variational},
we obtain the scaled inclusions
\[
\sqrt{\|v\|^{2}+1}\,v_1\in\partial f( x)\cap\bigl(\tau\sqrt{\|v\|^{2}+1}\,\mathbb{B}_1(0)\bigr),
\quad
\sqrt{\|v\|^{2}+1}\,v_2\in{\cal N}_{\E}( x)\cap\bigl(\tau\sqrt{\|v\|^{2}+1}\,\mathbb{B}_1(0)\bigr).
\]
Hence
\[
v\in\partial f( x)\cap\bigl(\tau\sqrt{\|v\|^{2}+1}\,\mathbb{B}_1(0)\bigr)
+{\cal N}_{\E}( x)\cap\bigl(\tau\sqrt{\|v\|^{2}+1}\,\mathbb{B}_1(0)\bigr),
\]
and therefore
\[
\partial(\fE)( x)\cap\mathbb{B}_1(0)
\subseteq
\partial  f( x)\cap \sqrt{2}\tau\mathbb{B}_1(0)
+{\cal N}_{\E}( x)\cap \sqrt{2}\tau\mathbb{B}_1(0).
\]
Thus, up to a rescaling of constants, the subtransversality of $f$ and $\E$ can be derived from the subtransversality of ${\rm epi}\,f$ and $\E\times\mathbb{R}$. 
\end{proof}

\begin{remark}
    RCQ ensures the nonemptiness and compactness of the multiplier set, and the stronger nondegeneracy condition further guarantees the uniqueness of the multiplier.
\end{remark}

We are now ready to state the main result of this subsection. Building upon the prox-regularity framework and the nonconvex subdifferential sum rule developed above, the following theorem establishes the local dual representability of the nonconvex proximal mapping.

\begin{theorem}[\bf Nonconvex dual representability]\label{Thm:nonconvex-dual-repre}
    Suppose Assumption~\ref{assump:basic} holds, and that given $\bar{x}\in{\cal F}$:
\begin{enumerate}[label=\bf {(\roman*)}]
\item $ f$ and $\delta_\E$ satisfies NSSR at $\bar x$.
\item $f$ is prox-regular at $\bar{x}$ for $\bar v=0$ with $\rho\in (0,1/\lambda)$ and $\bar x =\Prox_{f/\rho}(\bar x)$. 
\end{enumerate}
Then \eqref{Prox} is dual-representable near $z=\bar x$.
\end{theorem}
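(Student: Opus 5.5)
Let $x^*(z)$ denote a global solution of \eqref{Prox}. By Proposition~\ref{prop:SD-eq}(iii), it suffices to produce, for every $z$ near $\bar x$, a multiplier $y$ with $x^*(z)\in\Prox_{\lambda f}(z+A^*y)$. This reduces to three tasks: localize $x^*(z)$ near $\bar x$, get a bounded multiplier from NSSR, and upgrade the resulting stationarity condition to global proximal optimality via prox-regularity.

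\textbf{Step 1 (localization).} The condition $\bar x=\Prox_{f/\rho}(\bar x)$ means $\bar x$ is the unique global minimizer of $f(\cdot)+\frac{\rho}{2}\|\cdot-\bar x\|^2$. Since $\lambda<1/\rho$, the point $\bar x$ should also be the unique minimizer of \eqref{Prox} at $z=\bar x$. Indeed, for $x\in\F$,
\[
\lambda f(x)+\tfrac12\|x-\bar x\|^2=\lambda\bigl(f(x)+\tfrac{1}{2\lambda}\|x-\bar x\|^2\bigr)\ge \lambda\bigl(f(x)+\tfrac{\rho}{2}\|x-\bar x\|^2\bigr),
\]
and the right-hand side is minimized only at $\bar x$. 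Prox-boundedness with $\lambda_f>\lambda$ makes the objective level-bounded locally uniformly in $z$, so \cite[Theorem~1.25]{rockafellar1998variational} (applied to $\lambda\fE$) gives $\Prox_{\lambda\fE}(z)\to\bar x$ and $f(x^*(z))\to f(\bar x)$ as $z\to\bar x$. Hence for $z$ close to $\bar x$, every $x^*=x^*(z)$ lies in $\mathbb{B}_\delta(\bar x)$, $f$-attentively close to $\bar x$.

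\textbf{Step 2 (multiplier via NSSR).} Fermat's rule gives $u:=(z-x^*)/\lambda\in\partial\fE(x^*)$ (limiting subdifferential, using smoothness of the quadratic), and $\|u\|\to0$ as $z\to\bar x$. Take $d=\min\{1,2\|u\|\}$ (handling $u=0$ separately), so that $u\in\partial\fE(x^*)\cap\mathbb{B}_d(0)$. NSSR then yields $u=v+A^*w$ with $v\in\partial f(x^*)$, $\|v\|\le\tau d$, and $A^*w\in A^*\bY$. Thus $v\to0=\bar v$, so $v\in\partial f(x^*)\cap\mathbb{B}_\epsilon(0)$ is within the prox-regularity localization. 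The scaling of the ball bound in $d$ is exactly what forces $v$ small; this is why NSSR is stated for all $d\in(0,1]$. Set $y:=\lambda w$, so that $\frac{1}{\lambda}(z+A^*y-x^*)=v$.

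\textbf{Step 3 (from stationarity to global proximal optimality).} We must show that $x^*$ globally minimizes $\lambda f(x)+\frac12\|x-(z+A^*y)\|^2$. Prox-regularity gives
\[
f(x')\ge f(x^*)+\langle v,x'-x^*\rangle-\tfrac{\rho}{2}\|x'-x^*\|^2 \quad\text{for } x'\in\mathbb{B}_\epsilon(\bar x),
\]
and since $\frac{1}{2\lambda}>\frac{\rho}{2}$, this makes $x^*$ the minimizer over that ball. This is the main obstacle: prox-regularity is only local, and we need global optimality. I would use \cite[Proposition~13.37]{rockafellar1998variational}: under prox-boundedness and prox-regularity at $\bar x$ for $\bar v=0$, $\Prox_{\lambda f}$ is single-valued and Lipschitz near $\bar x+\lambda\bar v=\bar x$ and coincides with $(I+\lambda T)^{-1}$, where $T$ is the $f$-attentive localization of $\partial f$. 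The global-minimizer requirement in that result is supplied by the condition $\bar x=\Prox_{f/\rho}(\bar x)$ together with $\lambda<1/\rho$, which makes $\bar x=\Prox_{\lambda f}(\bar x)$.

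It remains to check that $z+A^*y=x^*+\lambda v$ lies in that neighborhood. Both $x^*\to\bar x$ and $v\to0$, so this holds for $z$ near $\bar x$. Then $x^*=\Prox_{\lambda f}(z+A^*y)$ and $Ax^*=b$, so $0\in F(y)$, and Proposition~\ref{prop:SD-eq} completes the argument.
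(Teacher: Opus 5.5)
Your proof is correct, and its final step is the paper's. The difference is in how you localize the primal solution.

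\textbf{Localization step.} The paper first argues that NSSR transfers prox-regularity from $f$ to $\fE$ at $(\bar x,0)$. It then applies \cite[Theorem~4.4]{poliquin1996prox} to $\fE$, so that $\Prox_{\lambda\fE}$ is single-valued and Lipschitz near $\bar x$ with constant $L=1/(1-\rho\lambda)$. This gives explicit estimates $\|\hat x-\bar x\|\le L\epsilon_z$ and $|f(\hat x)-f(\bar x)|\le\epsilon_1$, and an explicit radius $\epsilon_z$. You instead use only that $\bar x$ is the unique minimizer of \eqref{Prox} at $z=\bar x$, together with the continuity part of \cite[Theorem~1.25]{rockafellar1998variational} applied to $\lambda\fE$. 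That route is more elementary: it needs no prox-regularity of $\fE$, and it covers every global solution $x^*(z)$ without first establishing uniqueness. The cost is that the neighborhood is not quantified. Your route also shows that only the NSSR bound on the $\partial f$-component matters, since $z+A^*y=x^*+\lambda v$ is automatically close to $\bar x$. The paper bounds $\|A^*\hat y\|$ instead, which is unnecessary.

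\textbf{Two small repairs.}
\begin{enumerate}
\item From $u=v+A^*w$ and $u=(z-x^*)/\lambda$, the multiplier must be $y:=-\lambda w$, not $\lambda w$.
\item \cite[Proposition~13.37]{rockafellar1998variational} is stated only for all sufficiently small $\lambda$. By itself it does not cover the given $\lambda\in(0,1/\rho)$. The version with the global-minimizer hypothesis you describe is \cite[Theorem~4.4]{poliquin1996prox}, which the paper cites; hypothesis (ii) is tailored to it. You can also close the step directly.
\begin{itemize}
\item Let $w:=x^*+\lambda v$ and take any $x'\in\Prox_{\lambda f}(w)$. This set is nonempty since $\lambda<\lambda_f$.
\item Because $\Prox_{\lambda f}(\bar x)=\{\bar x\}$, Theorem~1.25 places $x'$ near $\bar x$ with $f(x')$ near $f(\bar x)$.
\item Set $v':=(w-x')/\lambda\in\partial f(x')$. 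It is small, so $(x',v')$ lies in the prox-regularity localization.
\item Add the prox-regularity inequalities at $(x^*,v)$ and at $(x',v')$, and use $\lambda(v-v')=x'-x^*$. This gives $(1/\lambda-\rho)\|x'-x^*\|^2\le 0$, so $x'=x^*$.
\end{itemize}
\end{enumerate}
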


\begin{proof}
    Assume that $f$ is prox-regular at $\bar{x}$ for $\bar v=0$ with respect to $\epsilon_1>0$ and $\rho<1/\lambda$ from Definition~\ref{def:prox-regular}. 
    It follows from~\cite[Theorem~4.4]{poliquin1996prox} that there exists a constant $\epsilon_2>0$ such that, for all $x\in\bB_{\epsilon_2}(\bar x)$, $y\in\bB_{\epsilon_1}(\bar x)$ and $v\in\partial f(y)\cap \bB_{\epsilon_1}(0)$ satisfying 
    $|f(y)-f(\bar x)|\le \epsilon_1$,
the following implication holds:
\begin{equation}\label{prox-implication}
y+\lambda v= x
\quad\Longrightarrow\quad
y=\Prox_{\lambda f}(x).
\end{equation}
    Assume that the NSSR of $f$ and $\delta_\E$ holds at $\bar{x}\in \dom \fE$ with the constants $\delta>0$ and $\tau>0$ from~\eqref{eq:ext}. 
    Since $\rho<1/\lambda$ and $\bar x=\Prox_{f/\rho}(\bar x)$, we have $\bar x=\Prox_{\lambda f}(\bar x)$. Moreover, $\bar x\in \dom \fE$ implies that $\bar x=\Prox_{\lambda f+\delta_\E}(\bar x)=\Prox_{\lambda \fE}(\bar x)$, and hence $0\in \partial(\fE)(\bar x)$. By the definition of prox-regularity and NSSR assumption, $ f+\delta_\E$ is also prox-regular at $\bar{x}$ for $\bar v=0 $ with respect to $\epsilon_3\coloneqq  \min\{\epsilon_1,\epsilon_1/\tau\}$ and $\rho<1/\lambda$.
    It follows from~\cite[Theorem 4.4]{poliquin1996prox} that there exists $\epsilon_4>0$ such that $\Prox_{\lambda \fE }$ is single-valued and Lipschitz continuous with constant $L\coloneqq1/(1- \rho\lambda)> 1$ in $\bB_{\epsilon_4}(\bar x)$. Set
    \begin{equation}\label{eps-z}
        \epsilon_z=\min\left\{
        \frac{\lambda}{L+1},
        \frac{\delta}{L+1},
        \frac{\epsilon_1}{L},
        \frac{\sqrt{\epsilon_1}}{L+2},
        \frac{\lambda \epsilon_1}{\tau (L+1)},
        \frac{\epsilon_2}{1+\tau (L+1)},
        \epsilon_4
\right\}.
    \end{equation}
Let $z\in\mathbb{B}_{\epsilon_z}(\bar{x})$ be arbitrary and $\hat{x}$ being the optimal solution of \eqref{Prox} at $z$. Next we show that \eqref{Prox} is dual-representable at $z$.
The first-order necessary condition~\cite[Theorem~10.1]{rockafellar1998variational} for \eqref{Prox} yields
\begin{equation}\label{eq:fonc}
-(\hat{x}-z)\in\lambda\partial  (\fE )(\hat{x}). \end{equation}
By the local Lipschitz continuity of $\Prox_{\lambda \fE}$, we have
\begin{align*}
    \|\hat x-\bar x\|
    &=
    \|\Prox_{\lambda \fE}(z)-\Prox_{\lambda \fE}(\bar x)\|
        \leq 
    L \|z-\bar x\|\leq L\epsilon_z,
    \\
    \|\hat x -z\|
    &\leq 
    \|\hat x-\bar x\| + \|z-\bar x \|
    \leq (L+1)\epsilon_z\leq \delta.
\end{align*}
By \eqref{eq:fonc} and the property~\eqref{eq:ext} of NSSR, for $d=({L+1})\epsilon_z/{\lambda}\in(0,1]$, we have
\begin{equation*}
-(\hat{x}-z)
\in \partial ( \lambda \fE)(\hat x)\cap \lambda \bB_{d}(0)
\subseteq
\partial  (\lambda f)(\hat{x})\cap(\tau\lambda)\mathbb{B}_d (0)+A^*\bY\cap(\tau\lambda)\mathbb{B}_d (0).
\end{equation*}
Thus, there exists $\hat{y}\in \bY$ and $v\in \partial  f(\hat{x}) $ such that 
\begin{equation}\label{eq:0y}
\|A^*\hat y\|\leq \tau \lambda d,\quad
\|v\|\leq  {\tau d},\quad 
\hat x +\lambda v= z+A^*\hat y.
\end{equation}
We now verify that all the conditions required to apply \eqref{prox-implication} are satisfied. Since
\begin{align*}
    \hat x =\Prox_{\lambda \fE}(z)
    &\quad \Longrightarrow\quad 
    \lambda f(\bar x)+\frac{1}{2}\|\bar x-z\|^2\geq \lambda f(\hat x)+\frac{1}{2}\|\hat x-z\|^2,\\
    \bar x=\Prox_{\lambda \fE}(\bar x) 
    &\quad \Longrightarrow\quad
    \lambda f(\hat x)+\frac{1}{2}\|\hat x-\bar x\|^2\geq \lambda f(\bar x)+\frac{1}{2}\|\bar x-\bar x\|^2=\lambda f(\bar x),
\end{align*}
we have 
\[
|f(\hat{x})-f(\bar x)|
\le 
\frac{1}{2\lambda}
(\|\bar x-z\|+\|\hat x-z\|)^2
\le (\epsilon_z+(L+1)\epsilon_z)^2
\le 
\epsilon_1.
\]
Moreover, $\|v\|\le \tau d=(L+1)\tau \epsilon_z/\lambda\le \epsilon_1$, $\|\hat{x}-\bar x\|
    \le
    L\epsilon_z\leq \epsilon_1$ and
\begin{align*}
    &\|(z+A^* \hat{ y})-\bar x\|
    \le 
    \|z-\bar x\|+\|A^*\hat y\|
    \le 
    {\epsilon_z}+\tau \lambda d
    =
    (1+\tau(L+1))\epsilon_z
    \le 
    \epsilon_2,
\end{align*}
we can now apply \eqref{prox-implication}, which indicates that
$$
\hat x +\lambda v= z+A^*\hat y
\quad\Longrightarrow\quad 
\hat{x}={\rm Prox}_{\lambda f}(z+A^* \hat{ y})
\quad\Longrightarrow\quad 
A {\rm Prox}_{\lambda f}(z+A^* \hat{ y})=b. 
$$
Thus, we have verified that \eqref{Prox} is dual-representable at $z$. Since $z$ is arbitrary in the neighborhood $\bB_{\epsilon_z}(\bar x)$, it follows that \eqref{Prox} is dual-representable near $\bar x$.
\end{proof}

\begin{remark}[\bf Reduction to the zero-subgradient case]\label{remark-v}
The standing assumption $\bar v = 0 \in \partial f(\bar x)$ is imposed only for notational convenience and entails no loss of generality. Indeed, for a general problem~\eqref{Prox}, let $\bar v \in \partial f(\bar x)$ be arbitrary, and define the shifted function
\[
g(x)\coloneqq f(x)-\langle \bar v, x-z\rangle .
\]
Let $g_{\E} = g+\delta_\E$.
Then the proximal problem~\eqref{Prox} associated with $f$ can be equivalently rewritten as 
\[
E_{\lambda \fE}(z)
=
E_{\lambda g_{\E}}(z-\lambda \bar v)
=\min_{x\in \bX}\left\{ \lambda g(x)+\frac{1}{2}\|x-(z-\lambda \bar v)\|^2 \;\middle|\; Ax=b\right\}.
\]
Moreover, by construction, we have $0\in \partial g(\bar x)$, so  the transformed problem satisfies the standing assumption $\bar v_g=0$. Consequently, all results established under the condition $\bar v=0$ apply directly to the original problem.
\end{remark}

\begin{remark}[\bf Practical scope of the assumptions]
Building on Remark~\ref{remark-v}, we now comment on the practical scope of the remaining assumptions in Theorem~\ref{Thm:nonconvex-dual-repre}. These assumptions are generally mild and nonrestrictive in applications. In particular, the NSSR condition is a weak regularity requirement and is implied by many standard constraint qualifications, as summarized in Proposition~\ref{Hierarchy-nonconvex}.  
Similarly, the prox-regularity assumption is standard in variational analysis and is consistent with the standing assumptions adopted throughout the prox-regularity literature; see, for instance, \cite{poliquin2010calculus, poliquin1996prox, rockafellar1998variational}. Consequently, both NSSR and prox-regularity are satisfied in a broad class of problems of practical interest.  

From an algorithmic perspective, for projected gradient-type methods with sufficiently small stepsizes $\lambda_k$, the iterates typically remain in a neighborhood of a reference point $\bar x$ where the required local assumptions hold. Hence, the conditions ensuring local dual representability are naturally fulfilled along the iteration.  
Moreover, when the proximal operator reduces to the metric projection, several of the assumptions simplify automatically, and it suffices to require only the prox-regularity of the underlying set. In this setting, Theorem~\ref{Thm:nonconvex-dual-repre} further reduces to the simplified form given in Theorem~\ref{Thm:nonconvex-dual-repre-proj}. 
\end{remark}

\begin{remark}[\bf Beyond local dual representability]
    In practice, the region of $z$ for which dual representability holds may be substantially larger than the local neighborhood, and can even coincide with the entire space, as illustrated by the sparse simplex projection problem in Subsection~\ref{example:sparse-simplex}.
\end{remark}

As a direct consequence of Theorem~\ref{Thm:nonconvex-dual-repre}, we obtain the following corollary, which requires substantially weaker assumptions when the parameter $\lambda$ is allowed to vary.

\begin{corollary}[\bf Dual representability with $\lambda$]\label{corollary:nonconvex-dual-repre-lambda}
    Suppose Assumption~\ref{assump:basic} holds, and that given $\bar{x}\in{\cal F}$:
\begin{enumerate}[label={\bf (\roman*)}]
\item $ f$ and $\delta_\E$ satisfies NSSR at $\bar x$.
\item $f$ is prox-regular at $\bar{x}$ for $\bar v=0$.
\end{enumerate}
Then for all $\lambda>0$ sufficiently large, \eqref{Prox} is dual-representable near $\bar x$.
    
\end{corollary}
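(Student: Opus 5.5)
The plan is to deduce the corollary from Theorem~\ref{Thm:nonconvex-dual-repre}. Hypothesis (i) carries over unchanged, so the work is to produce, from prox-regularity of $f$ at $\bar x$ for $\bar v=0$ alone, a modulus $\rho$ for which hypothesis (ii) holds, and then to identify the range of $\lambda$ this admits. Two facts drive this. First, the prox-regularity inequality in Definition~\ref{def:prox-regular} stays valid if $\rho$ is enlarged, so the modulus may be taken as large as needed. Second, the proximal weight $\lambda$ enters the argument of Theorem~\ref{Thm:nonconvex-dual-repre} only through the requirement $\rho\lambda<1$. I would first try to decouple the effective modulus from $\lambda$ altogether, which is what a claim for large $\lambda$ needs. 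If that fails, the argument falls back to the regime $\lambda<1/\rho$.

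\textbf{Step 1: the fixed-point condition $\bar x=\Prox_{f/\rho}(\bar x)$ for all large $\rho$.} Take $x=\bar x$, $v=0$ in Definition~\ref{def:prox-regular}. This gives $f(x')\ge f(\bar x)-\tfrac{\rho_0}{2}\|x'-\bar x\|^2$ on $\mathbb{B}_{\epsilon}(\bar x)$. Prox-boundedness in Assumption~\ref{assump:basic} gives constants $c,r$ with $f(x')\ge -c-r\|x'-\bar x\|^2$ on all of $\bX$. On the complement of the ball, $\|x'-\bar x\|\ge\epsilon$, so choosing $\rho$ large enough that $(\tfrac{\rho}{2}-r)\epsilon^2\ge c+f(\bar x)$ yields
\[
f(x')+\tfrac{\rho}{2}\|x'-\bar x\|^2 \;\ge\; f(\bar x)\quad\text{for all } x'\in\bX .
\]
Taking $\rho$ strictly larger than this threshold makes the inequality strict for $x'\neq\bar x$. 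This gives $\bar x=\Prox_{f/\rho}(\bar x)$ as a singleton, and prox-regularity with modulus $\rho$ still holds. Call this enlarged modulus $\bar\rho$.

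\textbf{Step 2: decoupling the modulus from $\lambda$, and the fallback.} This is the step I expect to be the main obstacle. Since $\lambda f$ has modulus $\lambda\bar\rho$, a large $\lambda$ cannot be absorbed merely by enlarging $\bar\rho$. I would first exploit the localization of the subgradients: in Definition~\ref{def:prox-regular}, only $v\in\partial f(x)\cap\mathbb{B}_\epsilon(0)$ matter. Shrinking $\epsilon$ then shrinks the effective curvature contribution $\langle v,x'-x\rangle$, and for indicator-type $f$ (where $\lambda f=f$) this makes the constant independent of $\lambda$. This is exactly the mechanism that would give the statement for all large $\lambda$. For general $f$, however, shrinking $\epsilon$ does not remove the intrinsic negative curvature of $f$ itself. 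If no such decoupling is available, the reduction gives dual representability only for $\lambda\in(0,1/\bar\rho)$, which is a small-$\lambda$ regime. Rescaling to $g=\lambda f$ does not help: its modulus $\lambda\bar\rho$ must then be below $1$ for unit weight, which again bounds $\lambda$ from above. So outside the indicator-type case I do not see how to reach the large-$\lambda$ regime as worded.

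\textbf{Step 3: applying the theorem.} Whichever admissible pair $(\lambda,\rho)$ Step 2 produces, fix $\rho$ with $\rho\lambda<1$. Step 1 supplies $\bar x=\Prox_{f/\rho}(\bar x)$ and prox-regularity with modulus $\rho$. Hypothesis (i) is assumed directly, so Theorem~\ref{Thm:nonconvex-dual-repre} yields dual representability of \eqref{Prox} for all $z$ in a neighborhood of $\bar x$. The radius is $\epsilon_z$ from \eqref{eps-z}, which depends on $\lambda$ through $L=1/(1-\rho\lambda)$.
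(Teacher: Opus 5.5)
Your hesitation in Step 2 is justified, and you can stop looking for a decoupling mechanism: as worded, the corollary is false. The intended conclusion is ``for all $\lambda>0$ sufficiently \emph{small}''. That is also all the paper's one-line proof delivers. \cite[Proposition~13.37]{rockafellar1998variational} is a small-$\lambda$ result: $\Prox_{\lambda f}$ is single-valued near $\bar x$ with $\Prox_{\lambda f}(\bar x)=\bar x$ for all sufficiently small $\lambda$. Theorem~\ref{Thm:nonconvex-dual-repre} likewise needs $\rho\lambda<1$. Note too that when $\lambda_f<\infty$, ``large $\lambda$'' collides with the requirement $\lambda<\lambda_f$ in Assumption~\ref{assump:basic}.

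A concrete counterexample: take $\bX=\bY=\R$, $A=1$, $b=0$, $\bar x=0$, and $f(s)=h(s):=\min\{s^2,(s-2)^2-1\}$.
\begin{itemize}
\item Since $h\ge -1$, we have $\lambda_f=+\infty$.
\item Since $h(s)=s^2$ for $s<3/4$, $f$ is $C^2$ near $0$ and hence prox-regular at $0$ for $\bar v=0$.
\item NSSR holds with $\tau=1$: here $\dom\fE=\{0\}$, $\partial\fE(0)=\R$ and $\partial f(0)=\{0\}$.
\end{itemize}
Now suppose $0\in\Prox_{\lambda f}(t)$. The first-order condition at $s=0$ forces $t=0$. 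Comparing with $s=2$ then gives $\lambda h(2)+\tfrac12\cdot 2^2=2-\lambda<0=\lambda h(0)$ whenever $\lambda>2$. Hence for every $\lambda>2$ the inclusion $0\in A\Prox_{\lambda f}(z+A^*y)-b$ has no solution, at any $z$. For a non-degenerate affine set, use $f(x)=x_1^2+h(x_2)$ on $\R^2$ with $Ax=x_2$; the same failure occurs.

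Your indicator-type observation is no evidence for the large-$\lambda$ claim either. For $f=\delta_\K$ one has $\lambda f=f$, so the problem does not depend on $\lambda$ at all. This is the scale invariance the paper invokes for Theorem~\ref{Thm:nonconvex-dual-repre-proj}, and no shrinking of $\epsilon$ is involved.

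For the corrected statement, your Steps 1 and 3 are exactly the paper's argument. Step 1 replaces the citation with a direct proof of the only ingredient needed. You enlarge the modulus until $\bar x=\Prox_{f/\bar\rho}(\bar x)$, using the prox-regularity inequality at $(x,v)=(\bar x,0)$ inside $\mathbb{B}_\epsilon(\bar x)$ and the global quadratic minorant from prox-boundedness outside it. Then every $\lambda\in(0,1/\bar\rho)$ meets hypothesis (ii) of Theorem~\ref{Thm:nonconvex-dual-repre}.

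One detail worth recording: if the minorant comes from some $\gamma<\lambda_f$, so that $r=1/(2\gamma)$, then $\bar\rho>2r$ gives $1/\bar\rho<\gamma<\lambda_f$. So these $\lambda$ automatically satisfy Assumption~\ref{assump:basic} as well.
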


\begin{proof}
   The conclusion follows immediately from \cite[Proposition~13.37]{rockafellar1998variational} together with Theorem~\ref{Thm:nonconvex-dual-repre}.
\end{proof}


\subsection{Strong convexity of the dual objective function}\label{sec:scvx}

In this subsection, we establish the strong convexity of the dual objective function $\dd$ defined in \eqref{eq:dual-function-prox} for the dual problem~\eqref{D}.
This property ensures local uniqueness of the dual solution and enables fast local convergence of first- and second-order methods for solving the unconstrained dual problem~\eqref{D}.
We first introduce the following set-valued mapping:
\begin{equation}\label{defTz}
T_z(u) := \Prox_{\lambda f}(z+u), \quad u \in \mathcal U \coloneqq A^*\mathbb{Y}.
\end{equation}

\begin{definition}[\bf strong monotonicity]
    The mapping \(T_z\) is said to be locally strongly monotone at \(\bar u \in \mathcal U\) if there exist constants \(\epsilon_{\bar u}>0\) and \(c>0\) such that
\[
\langle T_z(u_1)-T_z(u_2),\, u_1-u_2\rangle
\;\ge\;
c\,\|u_1-u_2\|^2,
\quad
\forall\, u_1,u_2 \in \mathbb B_{\epsilon_{\bar u}}(\bar u)\cap\mathcal U.
\]
It is said to be globally strongly monotone if the above inequality holds for all \(u_1,u_2 \in \mathcal U\).
\end{definition}

We next give an equivalent characterization of the strong convexity of the dual objective in terms of the strong monotonicity of the associated proximal mapping $T_z$, which is often easier to verify in practice. Accordingly, we analyze the convex and nonconvex proximal mapping cases separately in the following subsections: the convex case yields global results, whereas the nonconvex case leads to local results.


\subsubsection{Convex proximal mapping}

We first consider the convex setting, where the proximal mapping $T_z$ is single-valued and leads to a global characterization. 

\begin{theorem}
\label{thm:equivalent-strong-cvx-prox-cvx}
Under Assumption~\ref{assump:cvx}, for given $z\in\bX$, the following statements are equivalent:
\begin{enumerate}
[label=\bf (\roman*)]
\item The mapping \(T_z\) is globally strongly monotone.
\item The dual problem~\eqref{D} is globally strongly convex.
\end{enumerate}
\end{theorem}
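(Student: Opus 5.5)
The plan is to write $\dd$ as a composition $\dd(y)=h(A^*y)+\langle$affine$\rangle$ with $h(u) := -E_{\lambda f}(z+u)+\tfrac12\|z+u\|^2 - \tfrac12\|z\|^2$. The function $h$ depends only on $u\in\mathcal U=A^*\bY$, and strong convexity of $\dd$ in $y$ will be transferred to strong convexity of $h$ on $\mathcal U$.

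In the convex case, Proposition~\ref{prop:basic-props-dual}(iii) applies everywhere because $\Prox_{\lambda f}$ is single-valued. Thus $\dd$ is differentiable with $\nabla\dd(y)=A\,T_z(A^*y)-b$. I will use the standard fact that a differentiable convex function $\dd$ is strongly convex with modulus $\mu>0$ if and only if its gradient is strongly monotone:
\[
\langle \nabla\dd(y_1)-\nabla\dd(y_2),\,y_1-y_2\rangle\ge \mu\|y_1-y_2\|^2\quad\forall y_1,y_2\in\bY .
\]
The forward direction follows by adding the two strong-convexity inequalities. The reverse direction follows by integrating along the segment and using that $\nabla\dd$ is continuous, since $T_z$ is nonexpansive.

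Next I substitute the gradient formula and set $u_i=A^*y_i\in\mathcal U$. This gives
\[
\langle \nabla\dd(y_1)-\nabla\dd(y_2),y_1-y_2\rangle=\langle T_z(u_1)-T_z(u_2),\,u_1-u_2\rangle .
\]
It remains to compare $\|y_1-y_2\|$ with $\|u_1-u_2\|$. Since $A$ is surjective, $A^*$ is injective, so $\sigma_{\min}\|y\|\le\|A^*y\|\le\|A\|\,\|y\|$ with $\sigma_{\min}>0$ the smallest singular value of $A^*$. Moreover, $A^*:\bY\to\mathcal U$ is a bijection.

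With this, the two directions follow. For (i)$\Rightarrow$(ii): $\langle\cdot\rangle\ge c\|u_1-u_2\|^2\ge c\,\sigma_{\min}^2\|y_1-y_2\|^2$, so $\dd$ is strongly convex with modulus $c\sigma_{\min}^2$. For (ii)$\Rightarrow$(i): given $u_1,u_2\in\mathcal U$, choose the unique $y_i$ with $A^*y_i=u_i$. Then $\langle\cdot\rangle\ge\mu\|y_1-y_2\|^2\ge(\mu/\|A\|^2)\|u_1-u_2\|^2$.

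The main, though mild, obstacle is justifying the equivalence between strong convexity and strong monotonicity of the gradient. For the reverse direction I would consider $\phi(t)=\dd(y_2+t(y_1-y_2))$. I would write $\dd(y_1)-\dd(y_2)-\langle\nabla\dd(y_2),y_1-y_2\rangle=\int_0^1\langle\nabla\dd(y_2+t(y_1-y_2))-\nabla\dd(y_2),y_1-y_2\rangle dt\ge\int_0^1 \mu t\|y_1-y_2\|^2dt=\tfrac{\mu}{2}\|y_1-y_2\|^2$. This is valid because $\dd$ is $C^1$: $T_z$ is firmly nonexpansive, so $\nabla\dd$ is Lipschitz.
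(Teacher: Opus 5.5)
Your proposal is correct and follows essentially the same route as the paper: you use Proposition~\ref{prop:basic-props-dual}(iii) to get $\nabla\Phi_z(y)=A\,T_z(A^*y)-b$, identify $\langle\nabla\Phi_z(y_1)-\nabla\Phi_z(y_2),y_1-y_2\rangle$ with $\langle T_z(u_1)-T_z(u_2),u_1-u_2\rangle$ for $u_i=A^*y_i$, and transfer the moduli through two-sided bounds on $\|A^*y\|$. Your explicit justification that strong convexity is equivalent to strong monotonicity of $\nabla\Phi_z$ fills in a step the paper uses without comment, and the auxiliary function $h$ is never used, so neither changes the argument.
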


\begin{proof}
Under Assumption~\ref{assump:cvx}, the mapping $T_z(\cdot)$ defined in \eqref{defTz} is single valued for all $u\in\cU$. Thus, it follows from Proposition~\ref{prop:basic-props-dual} that the dual objective function $\dd$ is differentiable with
\[
\nabla \dd(y) = A\,T_z(A^*y)-b.
\]
Since $A$ has full row rank, there exist constants $\kappa_1,\kappa_2>0$ such that
\begin{equation}\label{eq:kappaA}
\kappa_1\|y\| 
\le 
\|A^*y\|
\le
\kappa_2\|y\|,\qquad\forall\; y\in\bY
.
\end{equation}

\smallskip
\noindent{(i)$\Rightarrow$(ii).}
Assume that $T_z:\mathcal U\to\mathbb X$ is globally strongly monotone.
Then 
\[
\langle T_z(u_1)-T_z(u_2),\, u_1-u_2\rangle
\;\ge\;
c\|u_1-u_2\|^2,
\qquad
\forall\, u_1,u_2\in \mathcal U .
\]
Let $u_i:=A^*y_i$ for $i=1,2$.
Using $\nabla\dd(y)=AT_z(A^*y)$ and \eqref{eq:kappaA}, we obtain
\[
\begin{aligned}
\langle \nabla\dd(y_1)-\nabla\dd(y_2),\, y_1-y_2\rangle
&= \langle T_z(A^*y_1)-T_z(A^*y_2),\, A^*(y_1-y_2)\rangle \\
&\ge c\|A^*(y_1-y_2)\|^2
\;\ge\; c\kappa_1^2\|y_1-y_2\|^2 .
\end{aligned}
\]
Since $y_1,y_2\in\bY$  are arbitrary, it follows that $\dd$ is globally strongly convex, and therefore
problem~\eqref{D} is globally strongly convex.

\smallskip
\noindent{(ii)$\Rightarrow$(i).}
Conversely, suppose that problem~\eqref{D} is globally strongly convex.
Then there exists $\mu> 0$ such that
\[
\langle \nabla\dd(y_1)-\nabla\dd(y_2),\, y_1-y_2\rangle
\;\ge\;
\mu\|y_1-y_2\|^2,
\qquad
\forall\, y_1,y_2\in \mathbb{Y}.
\]
Using again $\nabla\dd(y)=AT_z(A^*y)$ and \eqref{eq:kappaA}, we obtain
\[
\langle T_z(A^*y_1)-T_z(A^*y_2),\, A^*(y_1-y_2)\rangle
\;\ge\;
(\mu/\kappa_2^2)\|A^*(y_1-y_2)\|^2.
\]
Since $A^*y_1,A^*y_2\in\mathcal U$ are arbitrary, the above inequality exactly characterizes the global strong monotonicity of $T_z$ on $\mathcal U$.
\end{proof}


\subsubsection{Nonconvex proximal mapping}

We now turn to the nonconvex setting. In contrast to the convex case, strong monotonicity of the proximal mapping and strong convexity of the dual objective can only be expected locally, under suitable regularity conditions.

\begin{theorem}
\label{thm:equivalent-strong-cvx-prox}
Under the conditions of Theorem~\ref{Thm:nonconvex-dual-repre}, for any \(z\) sufficiently close to \(\bar x\in{\cal F}\), the following statements are equivalent:
\begin{enumerate}
[label=\bf (\roman*)]
\item The mapping \(T_z\) is locally strongly monotone at \(A^*y_z\).
\item Problem~\eqref{D} is locally strongly convex at its unique optimal solution $y_z$.
\end{enumerate}
\end{theorem}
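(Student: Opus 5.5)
Our plan is to reduce to the convex argument of Theorem~\ref{thm:equivalent-strong-cvx-prox-cvx}. To do so, we first show that for $z$ near $\bar x$ and $y$ near the dual solution $y_z$, the map $T_z$ is single-valued and locally Lipschitz. The dual function $\dd$ is then differentiable there with $\nabla\dd(y)=A\,T_z(A^*y)-b$ by Proposition~\ref{prop:basic-props-dual}(iii).

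\textbf{Step 1 (localization).} Recall from the proof of Theorem~\ref{Thm:nonconvex-dual-repre} that, for $z\in\bB_{\epsilon_z}(\bar x)$, there is $\hat y$ with $\hat x=\Prox_{\lambda f}(z+A^*\hat y)$, $A\hat x=b$, and $\|A^*\hat y\|\le \tau\lambda d=O(\epsilon_z)$. Hence $z+A^*\hat y$ lies in $\bB_{\epsilon_2}(\bar x)$. Prox-regularity of $f$ at $\bar x$ for $0$ with $\rho<1/\lambda$, together with $\bar x=\Prox_{\lambda f}(\bar x)$, gives via \cite[Theorem~4.4]{poliquin1996prox} a neighborhood $W$ of $\bar x$ on which $\Prox_{\lambda f}$ is single-valued and Lipschitz with constant $1/(1-\rho\lambda)$.

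Here we must be careful: the global $\Prox_{\lambda f}$ coincides with the localized one near $\bar x$. This holds because $\bar x$ is the unique global minimizer of $\lambda f+\frac12\|\cdot-\bar x\|^2$. Combining prox-boundedness with $\lambda<\lambda_f$, outer semicontinuity of $\Prox_{\lambda f}$ \cite[Theorem~1.25]{rockafellar1998variational} yields that $\Prox_{\lambda f}(w)\subset\bB_{\epsilon_1}(\bar x)$ for $w$ near $\bar x$. The localization theorem then applies and gives single-valuedness.

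With this in hand, $y\mapsto T_z(A^*y)$ is single-valued and Lipschitz for $y$ in a ball around $\hat y$. By Proposition~\ref{prop:basic-props-dual}, $\dd$ is $C^{1}$ there with the gradient formula above.

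\textbf{Step 2 (uniqueness of $y_z$).} By Proposition~\ref{prop:SD-eq}, any $y$ with $\nabla\dd(y)=0$ is a dual minimizer. Since $\dd$ is convex, its solution set is convex and contains $\hat y$. So if local strong convexity or local strong monotonicity holds at $\hat y$, this convex set must reduce to $\{\hat y\}$, because a convex set containing two points contains the segment between them, which would contradict strict convexity near $\hat y$. Thus $y_z:=\hat y$ is the unique optimal solution under either (i) or (ii), which makes the statement well-posed. Note also that $A^*$ is injective, so $A^*y_z$ is determined by $y_z$.

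\textbf{Step 3 (equivalence).} This repeats the computation in Theorem~\ref{thm:equivalent-strong-cvx-prox-cvx}, restricted to balls. Put $u_i=A^*y_i$ and use
\[
\langle\nabla\dd(y_1)-\nabla\dd(y_2),y_1-y_2\rangle=\langle T_z(u_1)-T_z(u_2),u_1-u_2\rangle ,
\]
together with \eqref{eq:kappaA}.
\begin{enumerate}
\item For (i)$\Rightarrow$(ii): take $y_i\in\bB_{\epsilon/\kappa_2}(y_z)$, so that $u_i\in\bB_\epsilon(A^*y_z)\cap\cU$. Strong monotonicity with modulus $c$ then gives strong monotonicity of $\nabla\dd$ with modulus $c\kappa_1^2$. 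This is equivalent to local strong convexity of the $C^1$ convex function $\dd$ on that ball.
\item For (ii)$\Rightarrow$(i): take $u_i\in\bB_{\epsilon\kappa_1}(A^*y_z)\cap\cU$, written as $u_i=A^*y_i$ with $\|y_i-y_z\|\le\epsilon$. The strong convexity modulus $\mu$ then yields strong monotonicity of $T_z$ with modulus $\mu/\kappa_2^2$.
\end{enumerate}

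The main obstacle is Step~1. We must guarantee that the global, possibly multivalued $\Prox_{\lambda f}$ is single-valued on the relevant neighborhood, so that the gradient formula holds and the inner product in Step~3 is meaningful. We would handle this by the outer-semicontinuity argument above, shrinking $\epsilon_z$ if necessary.
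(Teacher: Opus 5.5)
Your proposal is correct and follows essentially the same route as the paper: establish local single-valuedness of $T_z$ near $A^*y_z$, so that $\nabla\dd(y)=A\,T_z(A^*y)-b$ holds on a ball, then rerun the computation of Theorem~\ref{thm:equivalent-strong-cvx-prox-cvx} locally using \eqref{eq:kappaA}. The paper simply asserts the localization and the existence of $y_z$ ``by the proof of Theorem~\ref{Thm:nonconvex-dual-repre}''; your Step~1 (uniqueness of $\bar x$ as the global minimizer of $\lambda f+\frac12\|\cdot-\bar x\|^2$, plus outer semicontinuity from \cite[Theorem~1.25]{rockafellar1998variational}) and your Step~2 (uniqueness of $y_z$) spell out what the paper leaves implicit.
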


\begin{proof}
By the proof of Theorem~\ref{Thm:nonconvex-dual-repre},
under NSSR and prox-regularity,
there exist constants $\epsilon_y,\epsilon_z>0$ such that, for any
$z\in \mathbb{B}_{\epsilon_z}(\bar x)$, the dual problem~\eqref{D} admits an optimal
solution $y_z\in \mathbb{B}_{\epsilon_y}(0)$, the dual objective function $\dd$ is differentiable on
$\mathbb{B}_{2\epsilon_y}(0)$ and satisfies
\begin{equation}\label{grad-Phi-local}
    \nabla \dd(y) = A\,T_z(A^*y)-b, \qquad \forall\, y\in \mathbb{B}_{2\epsilon_y}(0),
\end{equation}
where $T_z(\cdot)$ defined in \eqref{defTz} is single valued for all $A^*y$ with $y\in\mathbb{B}_{2\epsilon_y}(0)$.
The proof is identical to that of Theorem~\ref{thm:equivalent-strong-cvx-prox-cvx}, except that all arguments are carried out locally around \(y_z\). Hence, the details are omitted.
\end{proof}


\section{Applications}\label{sec:applications}
In this section, we demonstrate how the general dual representation, strong duality, and strong convexity results established in the last two sections can be applied to specific problems.


\subsection{Projection problem}
A representative instance of \eqref{Prox} is the nonconvex projection problem introduced in \eqref{Proj}, which can be equivalently written as
\begin{equation}\label{eq:proj}
\min_{x\in\bX }\ \Big\{\frac12\|x-z\|^2 \ \Big|\ Ax=b, \; x\in \K\Big\},\tag{Proj}
\end{equation}
where $\mathcal K\subseteq \bX$ is a closed (possibly nonconvex) set, $A:\bX\to\bY$ is a surjective linear operator, and $z\in\bX$ and $b\in\bY$ are given. We assume throughout that the feasible set $\mathcal F=\mathcal K\cap \mathcal E$ is nonempty.
The associated dual problem is given by
\begin{equation}\label{eq:dual-proj}
\min_{y\in\bY }\ \dd(y)\coloneqq -\langle b,y\rangle-\tfrac12\|z\|^2+\tfrac12\|z+A^\ast y\|^2-\tfrac12\,\mathrm{dist}(z+A^\ast y,\mathcal K)^2.
\end{equation}
The subdifferential of $\dd$ admits the characterization
\[
\partial \dd(y)=\mathrm{co}\big(A\,\Pi_{\mathcal K}(z+A^\ast y)-b\big).
\]
If $\Pi_{\mathcal K}$ is single-valued at $z+A^\ast y$, then $\dd$ is differentiable at $y$ with gradient
\[
\nabla \dd(y)=A\,\Pi_{\mathcal K}(z+A^\ast y)-b.
\]
By Proposition~\ref{prop:SD-eq}, if there exists $y\in\bY$ such that 
$0\in A\,\Pi_{\mathcal K}(z+A^\ast y)- b$, 
then the projection problem \eqref{eq:proj} is dual-representable.
When $\mathcal K$ is convex, \eqref{eq:proj} is dual-representable if and only if the SSR between $\delta_\mathcal K$ and $\delta_\mathcal E$ holds, which is also equivalent to the strong CHIP condition \cite{deutsch1997dual} between $\K$ and $\E$. This regularity condition is mild and can be guaranteed by several stronger conditions listed in Proposition~\ref{prop:regularity-hierarchy}. In addition, for the projection problem \eqref{eq:proj}, since $\partial \delta_C=\partial^\infty \delta_C=\cN_C$ for any closed convex set $C$, the hierarchy of regularity conditions further reduces and yields the following stronger implication result:
\[
\begin{aligned}
\big[(1)
\; \Longleftrightarrow\;
(2)
\; \Longleftrightarrow\; (3)\big]
\;& {\Longleftarrow}\;  \big[ (4)
\; \Longleftrightarrow\; (5){\Longleftrightarrow}\; (6)\big]
\; \Longleftarrow\; (7).
\end{aligned}
\]
When $\mathcal K$ is nonconvex, under the NSSR and prox-regularity assumptions, the general dual representability result of Theorem~\ref{Thm:nonconvex-dual-repre} admits a substantially simplified form for the projection problem \eqref{eq:proj}, as stated below.

\begin{theorem}[\bf Nonconvex dual representability]\label{Thm:nonconvex-dual-repre-proj}
Given $\bar x\in \F$, assume that:
\begin{enumerate}[label=\bf {(\roman*)}]
\item $ \delta_\K$ and $\delta_\E$ satisfy NSSR at $\bar x$.
\item $\K$ is prox-regular at $\bar{x}$.
\end{enumerate}
Then \eqref{eq:proj} is dual-representable near $z=\bar x$.
\end{theorem}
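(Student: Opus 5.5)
The plan is to specialize Theorem~\ref{Thm:nonconvex-dual-repre} to $f=\delta_\K$ and check that its hypotheses are automatically satisfied. The point is that, for an indicator function, scaling by $\lambda$ changes nothing: $\Prox_{\lambda\delta_\K}=\Pi_\K$ for every $\lambda>0$, and $\lambda\delta_\K=\delta_\K$. We may therefore pick $\lambda$ freely to match the prox-regularity modulus.

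First, Assumption~\ref{assump:basic} holds. The function $\delta_\K$ is proper and lsc because $\K$ is closed and nonempty; indeed $\F\neq\emptyset$ gives $\K\neq\emptyset$. It is also bounded below by $0$, so it is prox-bounded with threshold $\lambda_f=+\infty>\lambda$ for every $\lambda>0$. Hypothesis (i) of Theorem~\ref{Thm:nonconvex-dual-repre} is exactly hypothesis (i) here.

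Second, we verify hypothesis (ii) with $\bar v=0$. Since $\bar x\in\K$, we have $0\in\widehat{\cal N}_\K(\bar x)\subseteq{\cal N}_\K(\bar x)=\partial\delta_\K(\bar x)$. We read ``$\K$ prox-regular at $\bar x$'' in the standard sense for sets: $\delta_\K$ is prox-regular at $\bar x$ for every $\bar v\in{\cal N}_\K(\bar x)$, and in particular for $\bar v=0$. This gives constants $\epsilon>0$ and $\rho\ge0$ as in Definition~\ref{def:prox-regular}. Enlarging $\rho$ keeps the defining inequality valid, so we may take $\rho>0$. We then choose $\lambda\in(0,1/\rho)$, which gives $\rho\in(0,1/\lambda)$.

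It remains to check the fixed-point condition $\bar x=\Prox_{f/\rho}(\bar x)$. Here $\Prox_{\delta_\K/\rho}(\bar x)=\Pi_\K(\bar x)=\{\bar x\}$, because $\bar x\in\K$ and any other point of $\K$ lies at positive distance from $\bar x$. Theorem~\ref{Thm:nonconvex-dual-repre} therefore applies with this $\lambda$. It yields a neighborhood $\bB_{\epsilon_z}(\bar x)$ such that for every $z$ in it there is $y$ with $0\in A\Prox_{\lambda\delta_\K}(z+A^*y)-b$.

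Finally, we transfer the conclusion back to \eqref{eq:proj}. Since $\Prox_{\lambda\delta_\K}=\Pi_\K$ and the problem \eqref{Prox} with $f=\delta_\K$ coincides with \eqref{eq:proj} for every $\lambda$, the condition above is exactly $0\in A\Pi_\K(z+A^*y)-b$. By Proposition~\ref{prop:SD-eq}, this means \eqref{eq:proj} is dual-representable for all $z$ near $\bar x$. The only delicate step is the second one: making sure that prox-regularity of the set delivers the prox-regularity of $\delta_\K$ at $\bar x$ for $\bar v=0$ with some modulus $\rho$, and that $\lambda$ can then be tuned to $\rho$ without affecting the projection problem.
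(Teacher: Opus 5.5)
Your proposal is correct and takes the same route as the paper. You specialize Theorem~\ref{Thm:nonconvex-dual-repre} to $f=\delta_\K$, using $\lambda\delta_\K=\delta_\K$ and $\Prox_{\lambda\delta_\K}=\Pi_\K$ to choose $\lambda<1/\rho$ freely, with prox-regularity of $\K$ at $\bar x$ supplying the case $\bar v=0$ (the paper cites \cite[Proposition~1.2]{poliquin2000local} for this step). You also spell out checks the paper leaves implicit: prox-boundedness with threshold $+\infty$, taking $\rho>0$, and $\Pi_\K(\bar x)=\{\bar x\}$.
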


The proof of Theorem~\ref{Thm:nonconvex-dual-repre-proj} follows directly from Theorem~\ref{Thm:nonconvex-dual-repre}. Since the projection problem \eqref{eq:proj} is invariant under positive scaling of the objective function, and $\mathcal K$ is prox-regular at $\bar x$ \cite[Proposition~1.2]{poliquin2000local}, the general conditions in Theorem~\ref{Thm:nonconvex-dual-repre} reduce to the simplified assumptions stated here. Moreover, similar to Proposition~\ref{Hierarchy-nonconvex}, the NSSR assumption required here is in fact mild: the subtransversality of $\mathcal K$ and $\mathcal E$ already implies the NSSR of $\delta_{\mathcal K}$ and $\delta_{\mathcal E}$; see \cite[Theorem~3.3]{wei2024subtransversality}.


\subsubsection{$C^2$ submanifold projection problem}
\label{subsec-C2-mani}

We now focus on the case where $\mathcal K$ is a $C^2$ embedded submanifold of $\mathbb R^n$.  
This class is broad and encompasses many sets of practical interest, including affine subspaces, spheres, Stiefel manifolds, and manifolds defined by smooth equality constraints.  
Under this geometric structure, $\mathcal K$ is prox-regular at every feasible point, which guarantees local dual representability under the NSSR assumption.  

Moreover, under an additional nondegeneracy condition, this setting ensures local strong convexity of the dual function~$\dd$, yielding sharper stability and convergence properties and recovering the spherical manifold case in~\cite[Proposition~1]{RiNNAL}.

\begin{theorem}[\bf Strong convexity for $C^2$ submanifold]\label{thm:strong-cvx-c2-manifold}
Assume that
\begin{enumerate}
[label=\bf (\roman*)]
    \item $\mathcal K$ is a $C^2$ embedded submanifold of $\R^n$.
    \item $\bar x\in\mathcal F=\K\cap \E$ satisfies the nondegeneracy condition.
\end{enumerate}
For $z$ near $\bar{x}$,
the dual problem~\eqref{D} is locally strongly convex at its optimal solution.
\end{theorem}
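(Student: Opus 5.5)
Our plan is to reduce the statement to Theorem~\ref{thm:equivalent-strong-cvx-prox}. Then it suffices to show that $T_z(u)=\Pi_{\K}(z+u)$ is locally strongly monotone on $\cU=A^*\bY$ near $u_z:=A^*y_z$.

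We first check that the hypotheses of Theorem~\ref{Thm:nonconvex-dual-repre-proj} hold, so that the dual solution $y_z$ exists and is close to $0$ for $z$ near $\bar x$. A $C^2$ embedded submanifold is prox-regular at every point. For such $\K$ the tangent cone $\T_{\delta_\K}(\bar x)=T_{\bar x}\K$ is a linear subspace. Hence nondegeneracy reads $T_{\bar x}\K+\ker A=\bX$, which coincides with RCQ. Since $\delta_\K$ is Clarke regular, Proposition~\ref{Hierarchy-nonconvex} then gives transversality, subtransversality, and finally NSSR. Consequently, for $z\in\bB_{\epsilon_z}(\bar x)$ there is $y_z$ with $\|y_z\|$ small, and $\Pi_\K$ is single-valued near $\bar x$.

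Next we compute the derivative of the projection. On a neighbourhood of $\bar x$ the projection $\Pi_\K$ is $C^1$; this is the standard tubular-neighbourhood result for $C^2$ manifolds. At a point $w$ with $p=\Pi_\K(w)$ and normal $\nu=w-p$, the derivative is
\[
D\Pi_\K(w)=\bigl(I-\mathcal L_{\nu}\bigr)^{-1}P_{T_p\K},
\]
where $\mathcal L_\nu$ is the shape operator (second fundamental form contracted with $\nu$) acting on $T_p\K$. At $w=\bar x$ we have $\nu=0$, so $D\Pi_\K(\bar x)=P_{T_{\bar x}\K}$. For $z$ near $\bar x$, the point $w_z=z+A^*y_z$ is near $\bar x$, so $D\Pi_\K(w_z)$ is close to $P_{T_{\bar x}\K}$.

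The key step is a uniform lower bound for the symmetric part of this derivative along $\cU$. For $u\in\cU$,
\[
\langle P_{T_{\bar x}\K}u,u\rangle=\|P_{T_{\bar x}\K}u\|^2 .
\]
We claim that nondegeneracy gives $\|P_{T_{\bar x}\K}A^*y\|\ge c_0\|A^*y\|$. Indeed, if $P_{T_{\bar x}\K}A^*y=0$, then $A^*y\in(T_{\bar x}\K)^\perp\cap(\ker A)^\perp=(T_{\bar x}\K+\ker A)^\perp=\{0\}$; compactness of the unit sphere in $\cU$ then yields $c_0>0$. By continuity of $D\Pi_\K$, there is a ball around $\bar x$ on which
\[
\langle D\Pi_\K(w)u,u\rangle\ge \tfrac{c_0^2}{2}\|u\|^2\quad\text{for all }u\in\cU .
\]
We then integrate along segments. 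For $u_1,u_2\in\cU$ close to $u_z$, the points $z+u_i$ lie in this ball, and
\[
\langle T_z(u_1)-T_z(u_2),u_1-u_2\rangle=\int_0^1\bigl\langle D\Pi_\K\bigl(z+u_2+t(u_1-u_2)\bigr)(u_1-u_2),\,u_1-u_2\bigr\rangle\,dt\ \ge\ \tfrac{c_0^2}{2}\|u_1-u_2\|^2 .
\]
This is local strong monotonicity, and Theorem~\ref{thm:equivalent-strong-cvx-prox} then gives local strong convexity of $\dd$ at $y_z$.

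The main obstacle is justifying the $C^1$ regularity of $\Pi_\K$, together with the derivative formula and its continuity, for a manifold that is only $C^2$. We expect to handle this by applying the implicit function theorem to the optimality system $w-p\in N_p\K$, written in local $C^2$ charts. An alternative that avoids the explicit formula is to use prox-regularity: it gives that $\Pi_\K$ is Lipschitz near $\bar x$, and a first-order expansion $\Pi_\K(w)=\bar x+P_{T_{\bar x}\K}(w-\bar x)+o(\|w-\bar x\|)$ would suffice. However, this $o(\cdot)$ bound must hold for differences, uniformly in the base point, which again essentially requires $C^1$ regularity.
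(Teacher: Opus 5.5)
Your proof is correct and is essentially the paper's argument: both rest on $D\Pi_{\K}(\bar x)=P_{T_{\bar x}\K}$, on nondegeneracy making $P_{T_{\bar x}\K}$ coercive on $\operatorname{Range}(A^*)$ (equivalently $A\,P_{T_{\bar x}\K}A^*\succ 0$), and on continuity of $D\Pi_{\K}$ near $\bar x$, while the paper simply cites Leobacher--Steinicke for the $C^1$ regularity you flag as the main obstacle. The differences are in packaging: you state the conclusion as local strong monotonicity of $T_z$ (integrating $D\Pi_{\K}$ along segments), pass through Theorem~\ref{thm:equivalent-strong-cvx-prox}, and place $y_z$ near $0$ via Theorem~\ref{Thm:nonconvex-dual-repre-proj} (NSSR from the regularity hierarchy), whereas the paper places $y_z$ via the implicit function theorem on the KKT system and argues directly that the Hessian $A\,D\Pi_{\K}(z+A^*y)\,A^*$ stays positive definite for $(z,y)$ near $(\bar x,0)$.
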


\begin{proof}
Since \(\mathcal K\) is a \(C^2\) embedded submanifold of \(\mathbb{R}^n\) and \(\bar x\in\mathcal K\), it follows from~\cite[Theorem~2]{leobacher2021existence} that the projection mapping \(\Pi_{\mathcal K}\) is \(C^1\) in a neighborhood of \(\bar x\), and from~\cite[Theorem~C]{leobacher2021existence} that the derivative of $\Pi_\K$ at \(\bar x\) is given by
$
D\Pi_{\mathcal K}(\bar x)=\Pi_{\T_{\mathcal K}(\bar x)}.
$
For \(z=\bar x\), the dual optimal solution of~\eqref{Proj} is \(y^*=0\), and hence
\[
\nabla^2\Phi_{\bar x}(0)
= A\,D\Pi_{\mathcal K}(\bar x)\,A^*
= A\,\Pi_{\T_{\mathcal K}(\bar x)}\,A^* .
\]
For any \(h\in\mathbb{Y}\),
\[
\langle h,\nabla^2\Phi_{\bar x}(0)h\rangle
= \big\|\Pi_{\T_{\mathcal K}(\bar x)}(A^*h)\big\|^2 .
\]
If \(\langle h,\nabla^2\Phi_{\bar x}(0)h\rangle=0\), then \(A^*h\in \N_{\mathcal K}(\bar x)\).
By the nondegeneracy condition,
\(\operatorname{Range}(A^*)\cap \N_{\mathcal K}(\bar x)=\{0\}\),
which implies \(A^*h=0\) and, since \(A\) has full row rank, \(h=0\).
Therefore, \(\nabla^2\Phi_{\bar x}(0)\succ0\).
Moreover, by nondegeneracy, the KKT system associated with the Lagrangian of the projection problem satisfies the assumptions of the implicit function theorem, and hence the dual solution mapping \(z \mapsto y_z\) is continuous near \(\bar x\), with \(y_z \to 0\) as \(z \to \bar x\).
Finally, since \(\Pi_{\mathcal K}\) is \(C^1\) near \(\bar x\), the mapping
\[
(z,y)\mapsto \nabla^2\Phi_z(y)
= A\,D\Pi_{\mathcal K}(z+A^*y)\,A^*
\]
is continuous in a neighborhood of \((\bar x,0)\).
Hence, the smallest eigenvalue of \(\nabla^2\Phi_z(y)\) remains positive for all \((z,y)\) sufficiently close to \((\bar x,0)\), which proves local strong convexity.
\end{proof}


\subsubsection{Sparse simplex projection problem}\label{example:sparse-simplex}

Consider the following problem
\begin{equation}
\min_{x \in \mathbb{R}^n} 
\left\{ 
\frac{1}{2}\|x - z\|^2 \; \big| \; e^\top x = 1,\; x \ge 0,\; \|x\|_0 \le k
\right\},
\label{eq:sparse-simplex}
\end{equation}
where $z \in \mathbb{R}^n$ is given and $k < n$ is the sparsity level. 
Let
$
\mathcal{K} := \{x \in \mathbb{R}^n \mid x\geq 0,\; \|x\|_0 \le k\},
$
and consider the dual formulation
\begin{equation}
0 \in F(y) := e^\top \Pi_{\mathcal{K}}(z + e y) - 1,
\label{eq:sparse-simplex-dual}
\end{equation}
where $\Pi_{\mathcal{K}}(x)$ can be computed explicitly as
\[
\Pi_{\mathcal{K}}(x) = [(x)^+]_k.
\]
Here $(\cdot)^+$ denotes the componentwise positive part, and $[\cdot]_k$ denotes the vector obtained by keeping the $k$ largest (positive) entries while setting all the others to zero.
Substituting the projection expression into \eqref{eq:sparse-simplex-dual}, we obtain
\[
F(y) = e^\top [(z + e y)^+]_k - 1.
\]
The function $F(y)$ is continuous, piecewise linear, nondecreasing in $y$, and strictly increasing in a neighborhood of any root of \eqref{eq:sparse-simplex-dual}. Hence, \eqref{eq:sparse-simplex} is globally dual representable, and \eqref{eq:sparse-simplex-dual} admits a unique solution at which the dual objective is locally strongly convex. The explicit solution can be derived by sorting the entries of $z$ in descending order, say $z_{(1)} \ge z_{(2)} \ge \dots \ge z_{(n)}$. 
Define
\[
y^{(j)} = \frac{1 - \sum_{i=1}^{j} z_{(i)}^+}{j}, \quad j\in[n].
\]
Let $j^*$ be the largest index in $ \{1,\dots,k\}$ such that
$
 z^+_{(j^*)} + y^{(j^*)} > 0. 
$
Then the optimal $x^*$ is given by
\[
x_i^* = \max\{z_i + y^{(j^*)}, 0\}, \quad i = 1,\dots,n,
\]
with only the $k$ largest nonnegative components remaining nonzero. A similar formula has also been derived from a different perspective in \cite{kyrillidis2013sparse}, while our simple dual representation provides an alternative and unified derivation.

This result can be naturally extended to the matrix setting, where the sparse simplex projection is applied to the eigenvalues of a symmetric matrix. Consider
\begin{equation}
\min_{X \in \mathbb{S}^n}
\left\{
\frac{1}{2} \|X - Z\|^2 \;\big|\;
\operatorname{Tr}(X) = 1,\; X \succeq 0,\; \operatorname{rank}(X) \le k
\right\},
\label{eq:sparse-simplex-matrix}
\end{equation}
which can be interpreted as the projection of a symmetric matrix $Z$ onto the intersection of the spectrahedron $\{X \succeq 0, \operatorname{Tr}(X) = 1\}$ and the nonconvex rank-constrained set. 
This formulation often appears in sparse PCA and low-rank approximation problems. 
Since the objective and constraints are spectral functions, problem \eqref{eq:sparse-simplex-matrix} can be reduced to the vector case through the eigenvalue decomposition $Z = U \operatorname{Diag}(\lambda(Z)) U^\top$. 
Specifically, the projection of $Z$ onto the feasible set can be computed by projecting its eigenvalue vector $\lambda(Z)$ onto the sparse simplex, and then reconstructing the projected matrix via
\[
X^* = U \operatorname{Diag}(\Pi_{\mathcal{K}\cap \E}(\lambda(Z))) U^\top,
\]
where $\E\coloneqq\{x\mid e^\top x=1\}$. This equivalence follows from the spectral invariance of both the Frobenius norm and the constraints, and shows that the matrix projection inherits the structure of the vector projection; see \cite{ding2018spectral} for a general treatment of spectral operators.


\subsubsection{Burer--Monteiro projection}

In many semidefinite programming (SDP) problems, the feasible set takes the form
\[
\mathcal{F}
:= 
\Bigl\{
    X \in \mathbb{S}^n_+
    \;\Big|\;
    \langle A_i, X \rangle = a_i,~ i \in \mathcal{I},\;
    \langle B_j, X \rangle \ge b_j,~ j \in \mathcal{J},\;
    \langle P^\top P, X \rangle = 0
\Bigr\}.
\]
Such constraints are difficult to handle directly because they involve the
intersection of affine equalities and inequalities with the PSD cone, making projection and first-order
optimality analysis highly nontrivial.
To alleviate this difficulty, one often applies the
Burer--Monteiro (BM) factorization
\[
X = R R^\top, \qquad R \in \mathbb{R}^{n\times r},
\]
which replaces the PSD constraint $X\succeq 0$ by a smooth low-rank manifold.
The corresponding feasible region in the factorized variable is
\[
\mathcal{M}
:=
\Bigl\{
    R \in \mathbb{R}^{n\times r}
    \;\Big|\;
    \langle A_i, R R^\top \rangle = a_i,~ i \in \mathcal{I},\;
    \langle B_j, R R^\top \rangle \ge b_j,~ j \in \mathcal{J},\;
    P R = 0
\Bigr\}.
\]
Although the BM reformulation transforms the PSD cone into a smooth manifold,
it introduces nonconvexity into the feasible set, 
which poses new difficulties for projection-type algorithms.
In particular, many existing methods such as projected gradient (PG),
proximal point, or Riemannian optimization schemes~\cite{tang2024solving,wang2025solving,wang2023decomposition} require computing
the Euclidean projection onto~$\mathcal{M}$:
\begin{equation} \label{eq-projM}
\min_{R \in \mathcal{M}} \;\; \frac12 \|R - R_0\|_F^2,
\end{equation}
which often becomes the dominant computational cost in practice.
To derive a dual representation of this projection, 
let us define the intermediate set
\[
\mathcal{K}
:=
\Bigl\{
    R \in \mathbb{R}^{n\times r}
    \;\Big|\;
    \langle A_i, R R^\top \rangle = a_i,~ i \in \mathcal{I},\;
    \langle B_j, R R^\top \rangle \ge b_j,~ j \in \mathcal{J}
\Bigr\},
\]
and note that the constraint $P R = 0$ can be viewed as an affine constraint.
By applying the general dual representation framework introduced earlier,
the projection onto $\mathcal{M}$ can be characterized by the inclusion
\begin{equation}\label{BM-proj}
    0 \in P\,\Pi_{\mathcal{K}}(R_0 + P^\top y).
\end{equation}
Under suitable qualification and regularity assumptions of $\mathcal{M}$ as specified in Theorem~\ref{Thm:nonconvex-dual-repre},
there exists a dual solution $y^*$ of~\eqref{BM-proj}, and 
hence an optimal solution $R^*= \Pi_{\cal K}(R_0 + P^\top y^*)$ can be
found for \eqref{eq-projM}.
A detailed discussion of such a projection formulation for a special
case of ${\cal K}$
and its algorithmic applications
can be found in~\cite{RiNNAL,RiNNAL+,RiNNALPOP}.


\subsection{Proximal gradient problem}\label{subsec-PG}

We consider the following constrained optimization problem:
\begin{equation*}\label{eq:fgA}
\min_{x\in \bX } \; \bigl\{ g(x) + f(x) \;\mid\; Ax=b \bigr\},
\end{equation*}
where $g:\bX \to \R$ is continuously differentiable, $f:\bX \to \Rbar$ is proper and lsc, but possibly nonsmooth and nonconvex, and $A:\bX \to \bY$ is a surjective linear operator.
Given a current iterate $x^k$, the proximal gradient method generates the next iterate by solving the subproblem
\begin{equation}\label{eq:pg_subproblem}
x^{k+1} \in \underset{{x\in \bX}}{\arg\min} \left\{
g(x^k)+\langle 
\nabla g(x^k), x - x^k \rangle
+ \frac{1}{2\lambda_k}\|x-x^k\|^2
+ f(x)
\;\middle|\;
Ax=b
\right\},
\end{equation}
where $\lambda_k>0$ is a stepsize parameter. By completing the square and denoting
\[
z^k \coloneqq x^k - \lambda_k \nabla g(x^k),
\]
the subproblem \eqref{eq:pg_subproblem} can be equivalently rewritten as
\begin{equation}\label{eq:pg_prox_form}
x^{k+1} \in
\operatorname{Prox}_{\lambda_k \fE}
\bigl(z^k\bigr)
:= \underset{x\in \bX}{\arg\min}
\left\{
f(x)
+ \frac{1}{2\lambda_k}\bigl\|x - z^k\bigr\|^2
\;\middle|\;
Ax=b
\right\},
\end{equation}
which is exactly in the form of the general proximal operator \eqref{Prox}. Under the conditions stated in Theorem~\ref{Thm:nonconvex-dual-repre}, the proximal subproblem \eqref{eq:pg_prox_form} is dual-representable and can be computed by solving
\[
0 \in A\,\Prox_{\lambda_k f}\bigl(z^k + A^\top y\bigr) - b.
\]
An illustrative example of the affine-constrained smoothly clipped absolute deviation (SCAD) proximal gradient step is provided in Subsection~\ref{exp:SCAD}.


\subsection{Proximal composite problem}

We now consider the general {composite optimization problem}
\begin{equation}\label{eq:comp}
    \min_{x\in \bX } \; \big\{ f(x) + g(Ax) \big\},
\end{equation}
where \(f:\bX \to\Rbar\) and \(g:\bY\to\Rbar\) are proper, lsc functions, and \(A:\bX \to\bY\) is a surjective linear operator.  
Problem~\eqref{eq:comp} can be equivalently written in the constrained form as
\begin{equation*}\label{eq:comp-cons}
    \min_{x\in\bX ,\,y\in\bY} \; \bigl\{ f(x)+g(y)\;\mid\; Ax - y = 0 \bigr\}.
\end{equation*}
Given proximal centers \((x_0, y_0)\) and penalty parameters \(\lambda > 0\), the corresponding {proximal subproblem} reads
\begin{equation*}\label{eq:prox-sub}
\min_{x\in\bX ,\,y\in\bY}\;
\Bigl\{
f(x)+g(y)
+\frac{1}{2\lambda}\|x-x_0\|^2
+\frac{1}{2\lambda}\|y-y_0\|^2
\;\Bigm|\;
Ax - y = 0
\Bigr\}.
\end{equation*}
Applying the general dual representation developed in Section~\ref{sec:dual}, the dual problem is
\[
0\in\begin{bmatrix}
    A&-I
\end{bmatrix}\Prox_{\lambda(f+g)} \left(\begin{bmatrix}
    x_0\\y_0
\end{bmatrix}+\begin{bmatrix}
    A^\top\\-I
\end{bmatrix}u\right).
\]
Since the inner minimization with respect to \((x,y)\) in the proximal mapping 
\(\Prox_{\lambda(f+g)}\) is separable across the variables, 
the inclusion above can be equivalently decomposed as
\begin{equation}\label{eq:prox-separable}
    0 \in A\,\Prox_{\lambda f}(x_0 + A^\top u)
        - \Prox_{\lambda g}(y_0 - u).
\end{equation}

\begin{example}[\bf Sparse regression problem]
Consider the following problem:
\[
\min_{x \in \bX } \;\Bigl\{\, \|Ax - b\|^2 + \lambda\|x\|_0 \Bigr\},
\]
where $\lambda>0$ is a given regularization parameter.  
Let
\[
f(x) = \|x\|_0, 
\qquad 
g(y) = \tfrac{1}{2}\|y - b\|^2.
\]
Then their proximal mappings admit simple closed forms:
\[
\Prox_{\lambda f}(z)
= H_{\tau}(z),
\quad
\text{where } (H_{\tau}(z))_i =
\begin{cases}
z_i, & |z_i| > \tau, \\[1mm]
\{0,\, z_i\}, & |z_i| = \tau, \\[1mm]
0, & |z_i| < \tau,
\end{cases}
\qquad 
\tau = \sqrt{2\lambda},
\]
and
\[
\Prox_{\lambda g}(w) = \dfrac{w + \lambda b}{1 + \lambda}.
\]

\noindent
Note that the hard-thresholding operator $H_{\tau}$ is set-valued at the threshold points $\{z\mid |z_i| = \tau \;\mbox{for some}\; i\in[n]\}$, corresponding to the non-uniqueness of the proximal mapping of the $\ell_0$ function.
Substituting these expressions into the proximal
subproblem~\eqref{eq:prox-separable} yields the dual inclusion
\[
0 \in A\,H_{\tau}(x_0 + A^\top u)
- \frac{y_0 - u + \lambda b}{1 + \lambda},
\]
which can be equivalently written as
\[
-u \in (1 + \lambda)\,A\,H_{\tau}(x_0 + A^\top u)
- y_0 - \lambda b.
\]
We focus on the branch where $(H_\tau(z))_i$ takes the value $0$ when $|z_i| = \tau$, 
acknowledging that $H_\tau$ is set-valued at the threshold. Under this convention, the hard-thresholding operator $H_{\tau}$ becomes piecewise affine. Consequently, the mapping $u \mapsto (1 + \lambda)A H_{\tau}(x_0 + A^\top u)$ 
is also piecewise affine. Once the active support
\[
S := \{\,i \mid |(x_0 + A^\top u)_i| > \tau\,\}
\]
is determined, the equation above becomes linear:
\[
-\bigl(I + (1+\lambda)A_S A_S^\top\bigr) u
= (1+\lambda)A_S (x_{0})_S - y_0 - \lambda b,
\]
where $A_S$ denotes the submatrix of $A$ formed by columns indexed by $S$.  
Since $\bigl(I + (1+\lambda)A_S A_S^\top\bigr)$ is nonsingular, the unique solution is
\[
u_S^*
= -\bigl(I + (1+\lambda)A_S A_S^\top\bigr)^{-1}
\bigl((1+\lambda)A_S (x_{0})_S - y_0 - \lambda b\bigr),
\]
subject to the consistency conditions
\[
\begin{cases}
|(x_0)_i + (A^\top u_S^*)_i| > \tau, & i \in S, \\[1mm]
|(x_0)_i + (A^\top u_S^*)_i| \leq \tau, & i \notin S.
\end{cases}
\]
\end{example}


\section{Numerical experiments}
\label{sec:numerical}
In this section, we conduct numerical experiments to demonstrate the effectiveness of solving the convex dual problem~\eqref{D} to obtain optimal solutions of the original nonconvex primal problem~\eqref{Prox}.
All experiments are performed using {\sc Matlab} R2023b on a workstation equipped with Intel Xeon E5-2680 (v3) processors and 96GB of RAM.


\paragraph{Baseline solvers.}
A wide variety of first- and second-order methods can be used to solve the unconstrained convex dual problem~\eqref{D}, and different solvers can be flexibly combined to improve efficiency. Here, we select only a few representative algorithms as baselines.
For the dual problem~\eqref{D}, we adopt gradient descent (GD) with Barzilai--Borwein (BB) stepsize, the semismooth Newton (SSN) method, and the limited-memory BFGS method (LBFGS).
For the primal problem~\eqref{Prox}, we implement an ADMM scheme.
For the projection problem~\eqref{eq:proj}, we further compare with alternating projection (AltProj), the SLRA package~\cite{markovsky2014software}, and the NewtonSLRA1/2 methods~\cite{schost2016quadratically}.


\paragraph{Stopping conditions.} 
Based on the optimality characterization~\eqref{eq:grad-dual} for \eqref{Prox}, we define the following relative residuals to measure the accuracy of the computed solution:
\[
\mathrm{R}_{\text{feas}} \coloneqq \frac{\lVert Ax-b\rVert}{1+\lVert b\rVert},\quad
\mathrm{R}_{\text{obj}} \coloneqq \frac{|f_{\lambda,z}(x)-f_{\lambda,z}(x^*)|}{1+|f_{\lambda,z}(x^*)|},\quad
\mathrm{R}_{\text{sol}} \coloneqq \frac{\lVert x-x^*\rVert}{1+\lVert x^*\rVert},
\]
where $x^*$ denotes the optimal solution of~\eqref{Prox}, and 
$f_{\lambda,z}(x) \coloneqq \lambda f(x)+\frac{1}{2}\|x-z\|^2$ denotes the objective function of~\eqref{Prox}.
For the dual approaches, we use $\mathrm{R}_{\mathrm{res}}$ and $\mathrm{R}_{\text{feas}}$ interchangeably, since the feasibility residual $\mathrm{R}_{\text{feas}}$ also provides a certificate of global optimality for any $x \in \operatorname{dom} f$, as guaranteed by Propositions~\ref{prop:SD-eq} and~\ref{prop:basic-props-dual}.
For a given tolerance $\tol>0$, the algorithm is terminated when $x\in\dom f$ and the feasibility residual satisfies $\mathrm{R}_{\text{feas}}<\tol$, or when either the maximum time limit $\timelimit$ or the maximum iteration number $\iterlimit$ is reached. Throughout our experiments, we set $\tol=10^{-6}$ unless otherwise specified, $\timelimit=600 \tt{(secs)}$, and $\iterlimit=1000$ for all solvers.


\paragraph{Implementation.}
For ADMM, we adopt the standard two-block scheme of~\cite{boyd2011distributed} with an adaptive penalty parameter. The penalty parameter $\rho$ is updated according to a heuristic rule with relative ratio $\mu=10$ and scaling factor $\tau=2$. All dual methods are initialized at $y^{0}=0$. For GD, we employ Barzilai--Borwein stepsize combined with a non-monotone line search, whereas both LBFGS and SSN utilize the Armijo line search with Armijo parameter $\gamma=10^{-4}$ and backtracking factor $\sigma=0.5$.


\paragraph{Table notations.} 
We use ``P-'' to denote algorithms for the primal problem~\eqref{Prox} and ``D-'' for those associated with the dual problem~\eqref{D}.
We use ``Iter'' to denote the number of iterations taken by an algorithm.


\subsection{Affine-constrained low-rank projection}

In this subsection, we use the affine-constrained low-rank projection problem~\cite{li2019low,schost2016quadratically,markovsky2014software} to show that, unlike many feasible methods which typically compute only inexact projections, the proposed dual approach is able to compute the exact projection point efficiently. We consider the following affine-constrained low-rank projection problem:
\begin{equation}
\label{eq:lowrank-proj}
\min_{X \in \mathbb{R}^{m \times n}} 
\left\{ 
\frac{1}{2}\|X - Z\|_F^2 
\;\middle|\; 
\mathcal{A}(X) = b,\ \mathrm{rank}(X) \le r
\right\},
\end{equation}
where $Z \in \mathbb{R}^{m \times n}$ is a given observation matrix, $\mathcal{A} : \mathbb{R}^{m \times n} \to \mathbb{R}^p$ is a surjective linear operator, $b \in \mathbb{R}^p$ is a given vector, and $r \le \min\{m,n\}$ is a target rank.
We set $m=n$, fix the rank $r=5$, and randomly generate the ground-truth matrix $\overline{X}=UV^{\top}$, where $U,V\in\mathbb{R}^{n\times r}$ have i.i.d.\ standard Gaussian entries. The affine constraint is imposed by fixing the diagonal entries as $\operatorname{diag}(X)=\operatorname{diag}(\overline{X})$. The observation matrix is then generated as $Z=\overline{X}+ E$,
where $E$ has i.i.d.\ standard normal entries.


\paragraph{Projection accuracy.}
We first compare the solution quality obtained by different solvers under a stringent tolerance $\tol=10^{-14}$ using the solution computed by SSN as the reference. 

\renewcommand{\arraystretch}{1.1}

\begin{longtable}{lcccrr}

\caption{Computational results for affine-constrained low-rank projection with $n=50$.}
\label{tab:SLRA-accuracy}\\

\toprule
{ Method} & { $\mathrm{R}_{\text{feas}}$} & { $\mathrm{R}_{\text{obj}}$} & { $\mathrm{R}_{\text{sol}}$} & { Iter} & { Time (s)} \\
\midrule
\endfirsthead

\toprule
{ Method} & { $\mathrm{R}_{\text{feas}}$} & { $\mathrm{R}_{\text{obj}}$} & { $\mathrm{R}_{\text{sol}}$} & { Iter} & { Time (s)} \\
\midrule
\endhead

\midrule
\multicolumn{6}{r}{Continued on next page}
\endfoot

\bottomrule
\endlastfoot

P-AltProj  & 9.84e-15 & 6.88e-04 & 1.09e-02 & 585 & 0.15 \\
P-NSLRA1   & 1.66e-16 & 8.84e-04 & 1.23e-02 & 5   & 3.78 \\
P-NSLRA2   & 1.09e-16 & 8.84e-04 & 1.23e-02 & 5   & 0.03 \\
P-SLRA     & 2.84e-15 & 7.11e-06 & 1.10e-03 & 27  & 8.16 \\

\midrule

P-ADMM     & 9.31e-15 & 1.70e-15 & 5.58e-14 & 555 & 0.18 \\
D-GD       & 4.40e-15 & 5.73e-15 & 2.66e-14 & 47  & 0.04 \\
D-LBFGS    & 7.76e-15 & 2.12e-15 & 4.60e-14 & 58  & 0.03 \\
D-SSN      & 9.74e-15 & 0.00e+00 & 0.00e+00 & 21  & 0.03 \\

\end{longtable}

As shown in Table~\ref{tab:SLRA-accuracy}, among the primal approaches, only ADMM is able to find the exact projection solution, whereas the other primal solvers exhibit noticeable objective and solution errors despite achieving small feasibility residual. In contrast, all dual solvers consistently achieve high accuracy in both objective value and solution error, together with small feasibility residual. This result highlights the effectiveness of the dual approach for solving~\eqref{eq:lowrank-proj}. Consequently, in the subsequent comparisons, we only compare the dual methods against primal ADMM.


\paragraph{Scalability and efficiency.}
We next evaluate the scalability and efficiency of the dual methods on large-scale instances, in comparison with the primal ADMM solver. To obtain a high-accuracy reference solution, we apply SSN with a stringent tolerance $\tol=10^{-14}$ as the benchmark, while all other methods use the default tolerance $\tol=10^{-6}$.

\renewcommand{\arraystretch}{1.1}

\begin{longtable}{lcccrr}

\caption{Computational results for affine-constrained low-rank projection with $n=1000$.}

\label{tab:SLRA-accuracy-large}\\

\toprule

{ Method} & { $\mathrm{R}_{\text{feas}}$} & { $\mathrm{R}_{\text{obj}}$} & { $\mathrm{R}_{\text{sol}}$} & { Iter} & { Time (s)} \\

\midrule

\endfirsthead

\toprule

{ Method} & { $\mathrm{R}_{\text{feas}}$} & { $\mathrm{R}_{\text{obj}}$} & { $\mathrm{R}_{\text{sol}}$} & { Iter} & { Time (s)} \\

\midrule

\endhead

\midrule

\multicolumn{6}{r}{Continued on next page}

\endfoot

\bottomrule

\endlastfoot

P-ADMM   & 1.28e-05 & 9.67e-06 & 3.78e-04 & 1000 & 238.59 \\
D-GD     & 6.14e-07 & 8.66e-08 & 1.61e-05 & 24   & 6.01   \\
D-LBFGS  & 8.92e-07 & 9.61e-09 & 1.97e-05 & 28   & 7.17   \\
D-SSN    & 2.69e-07 & 3.04e-07 & 3.31e-06 & 3    & 2.42   \\

\midrule

REF-SSN  & 2.15e-15 & 0.00e+00 & 0.00e+00 & 8    & 9.80   \\

\end{longtable}

As shown in Table~\ref{tab:SLRA-accuracy-large}, ADMM attains only modest accuracy after 1000 maximum iterations and over 200 seconds, whereas all dual solvers achieve significantly higher accuracy within a few dozen iterations and under 10 seconds. In particular, SSN converges within three iterations and yields the smallest residuals among all practical solvers. These results demonstrate the superior scalability and efficiency of the proposed dual approach for solving~\eqref{eq:lowrank-proj}. 
Therefore, the result of ADMM is omitted in the subsequent experiments due to its significantly higher runtime compared with the dual methods.


\subsection{Low-rank Euclidean distance matrix projection}

In this subsection, we assess the performance of the proposed dual formulations for the low-rank Euclidean distance matrices (EDMs) projection problem and demonstrate the local linear convergence rate of SSN.
This problem arise in applications such as sensor network localization~\cite{krislock2012euclidean}, molecular conformation~\cite{glunt1993molecular}, and multidimensional scaling~\cite{borg2005modern}. 
A matrix $D \in \mathbb{S}^{n}$ is called an EDM if either of the following equivalent conditions holds:
\begin{enumerate}
[label=\bf {(\roman*)}]
    \item There exists an integer $r \ge 1$ and a set of points $\{x_i\}_{i=1}^n \subset \mathbb{R}^r$ such that
    \begin{equation}\label{EDM-generation}
        D_{ij} = \|x_i - x_j\|_2^2, \quad \forall \ i, j\in [n].
    \end{equation}
    \item $D$ satisfies
    \[
    \diag(D)=0 \quad \text{and} \quad -JDJ \succeq 0,
    \]
    where $J := I - ee^\top/n$ and $e \in \mathbb{R}^n$ denotes the all-ones vector.
\end{enumerate}
In practice, the embedding dimension $r$ for the points 
$\{x_i\}$ is fixed a priori, and the rank of the corresponding matrix $JDJ$ is constrained to be at most $r$~\cite{qi2014computing}. 
Moreover, the distance matrix is only partially known in that a subset of its entries is available as exact measurements, while the remaining observed entries may be contaminated by noise. Let $\Omega \subset \{(i,j)\mid 1\le i<j\le n\}$ denote the index set of the exactly known entries, and let $d_{ij}$ represent the corresponding exact measurements for all $(i,j)\in\Omega$. For the remaining entries, we are given a noisy observation matrix $Z \in \mathbb{S}^{n}$. Consider the projection problem onto the set of low-rank EDMs that are consistent with both the exact structural information on $\Omega$ and the noisy observation $Z$:
\begin{equation}
    \label{prob:EDM}
\min_{D \in \mathcal{K}} 
\left\{ 
\delta_{\mathcal{K}}(D) + \frac{1}{2}\|D - Z\|_F^2 
\;\middle|\; 
\diag(D)=0,\; D_{ij}=d_{ij},\ \forall (i,j)\in\Omega
\right\},
\end{equation}
where the nonconvex set $\K$ is defined as
\[
\mathcal{K}\coloneqq\left\{D\in\S^n\;\mid\; -JDJ \succeq 0,\ \operatorname{rank}(JDJ) \le r\right\}.
\]
The projection onto $\K$ admits the closed-form expression~\cite[Proposition 3.3]{qi2014computing}:
\[
\Pi_{\K}(D)
= -\Pi_{\mathbb{S}_+^n(r)}(-JDJ) + (D - JDJ),
\]
where the projection onto $\mathbb{S}_+^n(r) \coloneqq \{X \in \mathbb{S}_+^n \mid \operatorname{rank}(X) \le r\}$ is given by the truncated eigenvalue decomposition that keeps the $r$ largest nonnegative eigenvalues. 
We generate the observation matrix \(Z\) following the procedure proposed in~\cite{qi2014computing}. The ground-truth EDM \(\overline D\in\mathbb{R}^{n\times n}\) is constructed from 3D helix data \cite{chu2003least,mishra2011low} via~\eqref{EDM-generation}:
\[
x_i=(4\cos(3t_i),\,4\sin(3t_i),\,2t_i),\qquad 
t_i=\frac{2\pi(i-1)}{n-1},\qquad i=1,\dots,n.
\]
A subset \(\Omega\) of off-diagonal entries is sampled uniformly with probability \(1/n\) and treated as exact. Let \(s(\overline D)\) be the empirical standard deviation of the corresponding true distances. Noise is added to the unobserved entries as follows:
\[
z_{ij}=d_{ij}+\sigma \cdot s(\overline D)\cdot \varepsilon_{ij},\qquad (i,j)\notin\Omega,
\]
where \(  \{\varepsilon_{ij}\}\) are i.i.d.\ Gaussian random noise. The observation matrix \(Z\) is then constructed using the exact distances on \(\Omega\) and the noisy distances \(z_{ij}\) on \(\Omega^c\). We set the noise level \(\sigma =10^{-2}\).

\begin{figure}[ht!]
    \centering
    \includegraphics[width=.9\linewidth]{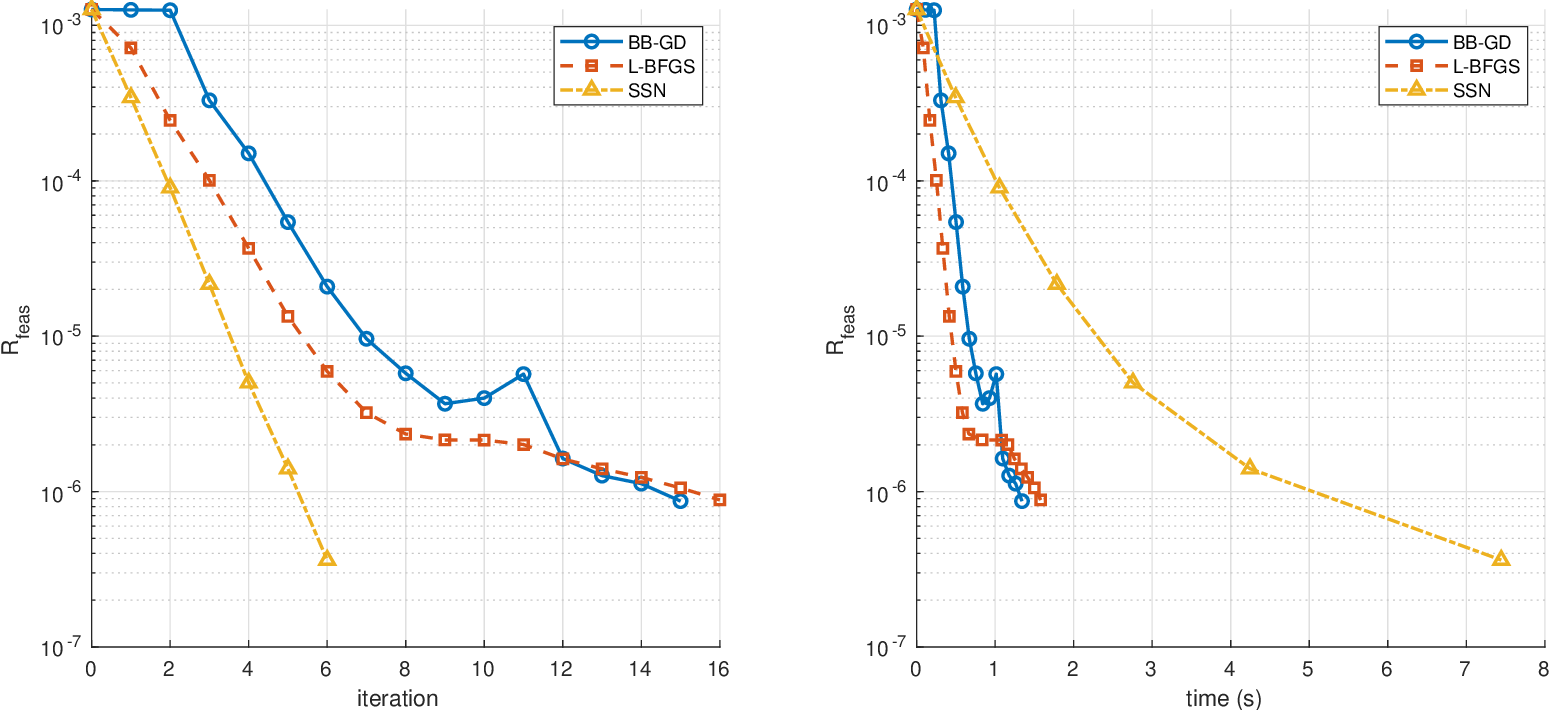}
    \caption{Illustration of convergence rate in terms of $\mathrm{R}_{\text{res}}$ for affine-constrained low-rank EDM projection problem with \(n=1000\).}
    \label{fig:placeholder}
\end{figure}

Figure~\ref{fig:placeholder} illustrates the convergence behavior of the three dual methods for the case \(n=1000\). While all methods exhibit rapid initial decrease in the residual \(R_{\mathrm{res}}\), the SSN method shows a clear linear convergence rate and reaches high accuracy within only a few iterations. In contrast, GD and LBFGS display slower asymptotic convergence despite their fast initial progress. In terms of runtime, GD and LBFGS reduce the residual quickly within the first second, whereas SSN is more expensive per iteration due to its use of second-order information.

\renewcommand{\arraystretch}{1.1}
\begin{longtable}{rrlcrr}
\caption{Computational results for affine-constrained low-rank EDM projection.}
\label{tab:partial-distance-merged}\\
\toprule
{ $n$ } & { $m$ } & { Method} & { $\mathrm{R}_{\text{res}}$} & { Iter} & { Time (s)} \\
\midrule
\endfirsthead

\toprule
{ $n$ } & { $|\mathcal{E}|$ } & { Method} & { $\mathrm{R}_{\text{res}}$} & { Iter} & { Time (s)} \\
\midrule
\endhead

\midrule
\multicolumn{6}{r}{Continued on next page}
\endfoot

\bottomrule
\endlastfoot

1000 & 469  & D-GD    & 8.68e-07 & 15 & 1.48 \\
1000 & 469  & D-LBFGS & 8.84e-07 & 17 & 1.67 \\
1000 & 469  & D-SSN   & 3.63e-07 & 7  & 7.53 \\
\midrule
3000 & 1482 & D-GD    & 8.32e-07 & 13 & 23.20 \\
3000 & 1482 & D-LBFGS & 7.44e-07 & 11 & 21.62 \\
3000 & 1482 & D-SSN   & 6.85e-07 & 7  & 141.04 \\

\end{longtable}

Table~\ref{tab:partial-distance-merged} reports the performance of the three methods for two problem sizes. The SSN method consistently converges in the smallest number of iterations, confirming its fast local convergence. However, its per-iteration cost is significantly higher, leading to much larger total runtime, especially for the larger case with \(n=3000\). In contrast, GD and LBFGS achieve the required accuracy with substantially lower computational time, making them more efficient in large-scale settings.


\subsection{Affine-constrained SCAD proximal mapping}\label{exp:SCAD}

In this subsection, we demonstrate the effectiveness of the proposed dual framework for computing general affine-constrained nonconvex proximal mappings, and highlight its practical potential as a plug-in solver within proximal gradient algorithms.
We investigate the affine-constrained proximal gradient step associated with the smoothly clipped absolute deviation (SCAD) penalty~\cite{fan2001variable}, which serves as a representative example of a nonconvex sparsity-inducing proximal subproblem~\cite{gong2013general} with linear constraints.
We consider the nonconvex SCAD-regularized problem
\begin{equation*}
\label{eq:scad-main}
\min_{x\in \R^n }
\left\{
g(x) + \sum_{i=1}^n \phi_{\mu,a}(x_i)
\;\middle|\;
Ax = b
\right\},
\end{equation*}
where $g:\R^n\to\R$ is continuously differentiable, $A:\R^n\to\R^m$ is surjective, $b\in\R^m$ is given, and $\phi_{\mu,a}$ denotes the SCAD penalty with parameters $\mu>0$ and $a>2$.
At iteration $k$ of a proximal gradient method, denote $z^k:=x^k-\lambda_k\nabla g(x^k)$, then the update $x^{k+1}$ is obtained by solving the affine-constrained proximal subproblem
\begin{equation}
\label{eq:scad-prox-subprob}
x^{k+1}\in \underset{x\in\R^n}{\arg\min}
\left\{
\sum_{i=1}^n \phi_{\mu,a}(x_i)
+\frac{1}{2\lambda_k}\|x-z^k\|^2
\;\middle|\; Ax=b
\right\}.
\end{equation}
The proximal mapping of the unconstrained SCAD is:
\begin{equation*}
\label{eq:scad-prox}
\big(\Prox_{\lambda_k\phi_{\mu,a}}(z)\big)_i=
\begin{cases}
0, & |z_i|\le \mu\lambda_k,\\[2mm]
\dfrac{z_i-\mu\lambda_k\,\mathrm{sign}(z_i)}{1-1/a},
& \mu\lambda_k<|z_i|\le a\mu\lambda_k,\\[3mm]
z_i, & |z_i|>a\mu\lambda_k,
\end{cases}\qquad i=1,\dots,n.
\end{equation*}
Since $\phi_{\mu,a}$ is $(a-1)^{-1}$-weakly convex and hence uniformly prox-regular, under mild regularity conditions, Theorem~\ref{Thm:nonconvex-dual-repre} applies to \eqref{eq:scad-prox-subprob} and guarantees that the affine-constrained proximal step admits a dual characterization through the inclusion
\begin{equation}
\label{eq:scad-dual-inc}
0\in A\,\Prox_{\lambda_k\phi_{\mu,a}}(z^k+A^\top y)-b.
\end{equation}
Since the SCAD proximal operator is piecewise smooth and locally single-valued away from the kink points
\[
\left\{ z \in \mathbb{R}^n \;\big|\; \exists\, i \in [n]
\ \text{s.t.}\ |z_i| \in \{\mu\lambda_k,\; a\mu\lambda_k\} \right\},
\]
the inclusion \eqref{eq:scad-dual-inc} defines a piecewise smooth nonlinear system in the dual variable $y$. For any fixed active regime of the SCAD thresholding, this system reduces to a linear equation, and the primal update
\[
x^{k+1}
=\Prox_{\lambda_k\phi_{\mu,a}}(z^k+A^\top y)
\]
is obtained explicitly. This provides an efficient and globally convergent approach for computing the affine-constrained SCAD proximal step~\eqref{eq:scad-prox-subprob} via its associated dual formulation~\eqref{eq:scad-dual-inc}.
We generate $A \in \mathbb{R}^{m \times n}$ with orthonormal rows by first drawing a matrix $G \in \mathbb{R}^{m \times n}$ with i.i.d.\ Gaussian entries, and then orthonormalizing its rows via a QR factorization. The ground-truth signal $\overline{x} \in \mathbb{R}^n$ is assumed to be $k$-sparse with $k=\lceil \rho n \rceil$, where the support is chosen uniformly at random and the nonzero entries are drawn i.i.d.\ from a Gaussian distribution. We set $b = A \overline{x}$, and the proximal input $z = \overline{x} + \sigma \xi$, where $\xi \sim \mathcal{N}(0,I_n)$. Throughout the experiments, we fix $\mu=1$, $a=3.7$, $\rho = 0.05$, $\sigma=0.01$ and $m=\lceil n/10\rceil$.

\renewcommand{\arraystretch}{1.1}
\begin{longtable}{rlcrr}
\caption{Computational results for affine-constrained SCAD proximal step with $n=10000$.}
\label{tab:SCAD}\\
\toprule
{ $\lambda$ } & { Method } & { $\mathrm{R}_{\text{res}}$ } & { Iter } & { Time (s)} \\
\midrule
\endfirsthead

\toprule
{ $\lambda$ } & { Method } & { $\mathrm{R}_{\text{res}}$ } & { Iter } & { Time (s)} \\
\midrule
\endhead

\midrule
\multicolumn{5}{r}{Continued on next page}\\
\endfoot

\bottomrule
\endlastfoot

0.01 & D-GD    & 6.85e-07 & 12   & 0.09 \\
0.01 & D-LBFGS & 9.57e-07 & 12   & 0.07 \\
0.01 & D-SSN   & 7.07e-07 & 5    & 0.51 \\
\midrule

0.1  & D-GD    & 9.76e-07 & 69   & 0.43 \\
0.1  & D-LBFGS & 4.79e-05 & 1000 & 204.85 \\
0.1  & D-SSN   & 2.73e-07 & 23   & 9.93 \\
\midrule

1    & D-GD    & 9.66e-07 & 212  & 1.45 \\
1    & D-LBFGS & 3.18e-04 & 1000 & 199.88 \\
1    & D-SSN   & 1.06e-01 & 400  & 500.66 \\
\end{longtable}

As shown in Table~\ref{tab:SCAD}, all three dual solvers are able to solve the instances with small  $\lambda$. As $\lambda$ increases, GD remains robust, while LBFGS frequently stagnates and hits the maximum iteration limit, leading to noticeably degraded accuracy. SSN consistently converges in far fewer iterations, however, for large $\lambda$, its performance also deteriorates, reflecting the lack of globalization and its purely local convergence behavior. 
These results confirm that affine-constrained nonconvex proximal steps with SCAD regularization can be efficiently solved through the proposed dual framework.


\section{Conclusions}
\label{sec:conclusion}

In this paper, we establish a unified multiplier-based framework for the analysis and computation of affine-constrained proximal mappings in both convex and nonconvex settings. By identifying dual representability conditions based on subdifferential regularity and prox-regularity, we show that a broad class of affine-constrained nonconvex proximal problems can be equivalently reformulated as unconstrained convex problems in the dual space. This provides a unified treatment of classical convex projection theory and nonconvex proximal analysis within a single variational framework.
From a computational perspective, the resulting dual problem is an unconstrained convex optimization problem whose dimension is equal to the number of affine constraints and is typically much smaller than that of the primal variable.
Moreover, under suitable regularity and nondegeneracy conditions, we show that the dual objective is strongly convex, which guarantees the uniqueness of the dual multiplier and fast local convergence of first- and second-order methods.
As a consequence, the original affine-constrained nonconvex problem can be addressed by applying any suitable unconstrained  optimization algorithm to a convex dual problem. This formulation decouples constraint handling from algorithm design and enables the integration of the framework with a wide range of first- and second-order methods. The framework provides a general basis for treating large-scale structured optimization problems with hard affine constraints, including applications in low-rank optimization, structured sparsity, and general nonconvex constraints.

\bibliographystyle{abbrv}
\bibliography{QCQO}

\end{document}